\UseRawInputEncoding
\documentclass[12pt]{article}
\usepackage{amsmath, amssymb, amsthm, graphicx, hyperref}
\usepackage{xcolor}

\newtheorem{theorem}{Theorem}
\newtheorem{definition}[theorem]{Definition}
\newtheorem{remark}[theorem]{Remark}
\newtheorem{proposition}[theorem]{Proposition}
\newtheorem{lemma}[theorem]{Lemma}

\newcommand{\SOS}{\operatorname{SOS}}
\newcommand{\BSR}{\operatorname{BSR}}

\title{A Weak Condition for Limited Augmented Zarankiewicz Numbers}
\author{Liqun Qi\footnote{Jiangsu Provincial Scientific Research Center of Applied Mathematics, Nanjing 211189, China.
Department of Applied Mathematics, The Hong Kong Polytechnic University, Hung Hom, Kowloon, Hong Kong.
({\tt maqilq@polyu.edu.hk})}
\and
Chunfeng Cui\footnote{School of Mathematical Sciences, Beihang University, Beijing 100191, China.
({\tt chunfengcui@buaa.edu.cn})}
\and
Yi Xu\footnote{School of Mathematics, Southeast University, Nanjing 211189, China.
Jiangsu Provincial Scientific Research Center of Applied Mathematics, Nanjing 211189, China.
({\tt yi.xu1983@hotmail.com})}}
\date{\today}

\begin{document}

\maketitle

\begin{abstract}
This paper introduces the weak augmented Zarankiewicz number $z_{wA}(m,n)$ and the weak limited augmented Zarankiewicz number $z_{wL}(m,n)$, which are combinatorial extensions of the classical Zarankiewicz number obtained by relaxing the original admissibility conditions for augmented bipartite graphs. We show that the resulting weak framework still guarantees irreducibility of the associated doubly simple biquadratic forms, with SOS rank equal to the total number of edges. This yields the inequality chain
\[
\mathrm{BSR}(m,n) \geq z_{wA}(m,n) \geq z_{wL}(m,n) \geq z_L(m,n) \geq z(m,n).
\]

We provide three complementary constructions demonstrating the power of the weak framework. First, a $5\times 3$ construction using degenerate 2-edges yields $z_{wL}(5,3)\ge 10>9=z_L(5,3)$, giving $\BSR(5,3)\ge 10$. Second, a $15\times 6$ construction on the incidence graph of $K_6$ with 14 nondegenerate 2-edges gives $z_{wL}(15,6)\ge 44>43$, improving the previously known bound. Third, a critical $6\times 3$ construction with complementary 2-cycles gives $z_{wL}(6,3)\ge 12>11=z_L(6,3)$, yielding $\BSR(6,3)\ge 12$ and demonstrating that complementary 2-cycles are safe.

\end{abstract}

\subsection*{Keywords}
biquadratic form; sum of squares; SOS rank; Zarankiewicz number; limited augmented Zarankiewicz number; bipartite graph; weak limited augmented Zarankiewicz number; $C_4$-cycle

\subsection*{AMS Subject Classifications}
14P10; 05C35; 11E25; 15A69; 90C22

\section{Introduction}

Denote $\{1,\ldots,m\}$ as $[m]$. Assume that $m \geq n \geq 2$. Let
\[
P(\mathbf{x},\mathbf{y}) = \sum_{i,k=1}^{m}\sum_{j,l=1}^{n} a_{ijkl} x_i x_k y_j y_l.
\]
We call $P$ an $m \times n$ biquadratic form. Here $a_{ijkl}$ are real numbers. We assume that
\[
a_{ijkl} = a_{kjil} = a_{klij},
\]
for $i,k \in [m]$, $j,l \in [n]$. A PSD (positive semi-definite) biquadratic form is one for which $P(\mathbf{x},\mathbf{y}) \geq 0$ for all $\mathbf{x},\mathbf{y}$. It is an SOS (sum of squares) if it can be written as a finite sum of squares of bilinear forms, i.e.,
\[
P(\mathbf{x},\mathbf{y}) = \sum_{p=1}^{r} f_p(\mathbf{x},\mathbf{y})^2. \tag{1}
\]
The minimum number $r$ of squares required in such a representation is called the \textbf{SOS rank} of $P$, denoted $\mathrm{SOS}(P)$. The \textbf{biquadratic SOS rank} $\mathrm{BSR}(m,n)$ is defined as the maximum SOS rank among all $m \times n$ SOS biquadratic forms \cite{cui2026, qi2026}.

The classical Zarankiewicz number $z(m,n)$ is the maximum number of edges in an $m \times n$ bipartite graph with no $C_4$-cycles \cite{zaran1951,guy1969,reiman1958}. It is known \cite{cui2026} that $\mathrm{BSR}(m,n) \geq z(m,n)$, with strict inequality possible for certain small dimensions. This connection motivated the introduction of augmented bipartite graphs and the augmented Zarankiewicz numbers \cite{qi2026, qi26a}.

In this paper, we propose a significant simplification of the augmented framework. We replace the original Condition 2 by two weaker requirements: an acyclic dependency condition on nondegenerate 2-edges, and a simplified local prohibition that the two opposite cells of a nondegenerate 2-edge cannot both be 1-edges. We also replace the original Condition 3 by a weaker condition, denoted W3, which only requires that for any vertex-disjoint pair of edges where at least one is a 2-edge, at least one of their opposite cells is unoccupied.

The main contributions of this paper are as follows. First, we establish that any weak admissible augmented bipartite graph yields an irreducible doubly simple biquadratic form, with SOS rank equal to the total number of edges. This result extends the original limited augmented theorem and is proved via a modified vector argument that exploits the acyclicity condition (W2) and the local prohibition (W2').

However, the current proof is much harder than the proof under the original conditions in \cite{qi2026}, as the weakening of the conditions cut several extra tools. We will explain this in Section 3.

{
Second, we present three complementary constructions demonstrating the power of the weak framework.

\medskip
\noindent\textbf{A low-dimensional strict improvement using degenerate 2-edges.}
In Section~\ref{sec:wL53}, we exhibit a $5\times 3$ weak admissible limited augmented graph with total edge count 10, using only row-degenerate 2-edges. Since $z_L(5,3)=9$ was established in \cite{qi26a}, this gives
\[
z_{wL}(5,3) \ge 10 > 9 = z_L(5,3),
\]
and consequently
\[
\mathrm{BSR}(5,3) \ge 10.
\]
This is the smallest dimension in which the weak framework yields a strict improvement over the original limited augmented framework. It also demonstrates that degenerate 2-edges are not merely a technical convenience but can be essential for achieving optimal bounds.

\medskip
\noindent\textbf{A large-scale nondegenerate construction.}
In Section~6, we present a $15 \times 6$ construction on the incidence graph of $K_6$ with $14$ nondegenerate 2-edges. This construction satisfies the weak conditions but violates the original Condition 2. For the incidence graph of $K_6$, we provide an explicit construction with $14$ nondegenerate 2-edges, giving
\[
z_{wL}(15,6) \ge 44.
\]
This improves the previously known limited-framework bound \cite{xu2026}
\[
z_L(15,6) \ge 43,
\]
and hence establishes
\[
\mathrm{BSR}(15,6) \ge 44.
\]

\medskip
\noindent\textbf{A critical construction with complementary 2-cycles.}
In Section~\ref{sec:6x3}, we present a $6\times 3$ weak admissible limited augmented graph with total edge count 12. This construction contains a complementary pair of 2-edges:
\[
(1,1;2,2)\quad\text{and}\quad(1,2;2,1),
\]
which form a 2-cycle in the dependency graph. Under the modified (W2), complementary 2-cycles are explicitly allowed. The associated doubly simple biquadratic form is irreducible with
\[
\SOS(P_G)=12,
\]
giving
\[
z_{wL}(6,3) \ge 12 > 11 = z_L(6,3),
\]
and consequently
\[
\BSR(6,3) \ge 12.
\]
This example demonstrates that allowing complementary 2-cycles by Condition W2  is not merely a technical convenience but is essential for obtaining improved lower bounds in the weak framework.

\medskip
\noindent
These three examples together make the paper's main message crystal clear: {\bf the weak framework is not just a theoretical curiosity --- it allows real constructions that were previously forbidden, yielding improved lower bounds in small, moderate, and large dimensions, while also demonstrating that complementary 2-cycles are safe and essential.}
}

Additionally, we show that the local prohibition (W2') is not merely a convenience but a genuine necessity: without it, there exist examples that satisfy all other weak conditions yet are reducible. Specifically, we exhibit a $5\times 3$ construction with two 2-edges whose associated doubly simple biquadratic form admits a decomposition into only 9 squares, despite having 10 edges.

{
We also establish a parallel result for (W3): without it, a $4\times 4$ construction satisfying (S), (W1), (W2), and (W2') admits an SOS decomposition into 10 squares despite having 11 edges. Thus W3 cannot be removed from the weak framework either.

Together, these two counterexamples demonstrate that both W2' and W3 are essential conditions: removing either one allows reducible examples to enter the admissible class. The necessity of the acyclicity condition (W2) is now partially resolved: complementary 2-cycles are safe, as shown by the $6\times 3$ example, but whether longer cycles can be allowed remains open.
}

Finally, we discuss several open problems arising from the weak framework.

\section{Weak Limited Augmented Zarankiewicz Numbers}

\subsection{Augmented Bipartite Graphs}

Let $G_1 = (S,T,E_1)$ be an $m \times n$ bipartite graph, where $S = [m]$ and $T = [n]$ are its vertex sets. Assume that $G_1$ has no $C_4$-cycles. Then we say that $G_1$ can be \textbf{augmented} to an augmented bipartite graph $G = (S,T,E)$, where the edge set $E = E_1 \cup E_2$. Here, we call any edge $(i,j)$ of $G_1$ a \textbf{1-edge} of $G$, while $E_2$ is the \textbf{2-edge set} of $G$.

A \textbf{1-edge} $e$ in $E_1$ has the form $e = (i,j)$, where $i \in S$ and $j \in T$. On the other hand, a \textbf{2-edge} $e$ in $E_2$ is formed as $(i,j;k,l)$, where $i,k \in S$ and $j,l \in T$.

A 2-edge can be:
\begin{itemize}
\item \textbf{nondegenerate} if $i \neq k$ and $j \neq l$,
\item \textbf{row-degenerate} if $i = k$ and $j \neq l$,
\item \textbf{column-degenerate} if $i \neq k$ and $j = l$.
\end{itemize}
It cannot have both $i = k$ and $j = l$.

Furthermore, we impose the following \textbf{simplicity condition (S)} on such an augmented bipartite graph $G$:

\textbf{(S)} No 2-edge overlaps with any 1-edge or other 2-edge on a cell. Here, we call $(i,j)$ a \textbf{cell} for any $i \in S$ and $j \in T$.

If $|E_1| = z(m,n)$, we say that $G$ is a \textbf{limited augmented bipartite graph}.

For such an augmented bipartite graph $G$, we associate it with an $m \times n$ SOS biquadratic form $P_G$, defined as
\[
P_G(\mathbf{x},\mathbf{y}) = \sum_{(i,j)\in E_1} x_i^2 y_j^2 + \sum_{(i,j;k,l)\in E_2} (x_i y_j + x_k y_l)^2. \tag{2}
\]
We call $P_G$ a \textbf{doubly simple biquadratic form}. We say that $P_G$ is \textbf{irreducible}, if the SOS rank of $P_G$ is $|E| = |E_1| + |E_2|$.

\subsection{Weak Generalized $C_4$-Cycles}

We now define a simplified notion of a generalized $C_4$-cycle.

\begin{definition}[Weak Generalized $C_4$-Cycle, Weak Augmented Zarankiewicz Number, and Weak Limited Augmented Zarankiewicz Number]
Let $G = (S,T,E_1 \cup E_2)$ be an $m \times n$ augmented bipartite graph with vertex sets $S = [m]$ and $T = [n]$, augmented from a $C_4$-cycle-free bipartite graph $G_1 = (S,T,E_1)$. A cell $(i,j)$ is called \textbf{occupied} if $(i,j) \in E_1$ or $(i,j)$ is a half of some 2-edge in $E_2$.

We say that $G$ contains a \textbf{weak generalized $C_4$-cycle} if any of the following holds:

\textbf{(W1)} The 1-edge graph $G_1$ contains a classical $C_4$;

{
\textbf{(W2)} The dependency graph $\mathcal{D}$ (defined below) contains a directed cycle, except that a 2-cycle formed by a pair of complementary 2-edges is allowed.}

\textbf{(W2')} There exists a nondegenerate 2-edge $(i,j;k,l) \in E_2$ such that both opposite cells $(i,l)$ and $(k,j)$ are 1-edges;

\textbf{(W3)} For any vertex-disjoint pair of edges where at least one edge is a 2-edge
(a 1-edge and a 2-edge, or two 2-edges), at least one of their opposite cells is unoccupied.

If none of these occurs, then $G$ is called \textbf{weak admissible}.
\end{definition}

{
\begin{definition}[Complementary 2-edges]
Two nondegenerate 2-edges
\[
e_1=(i,j;k,l),\qquad e_2=(i,l;k,j)
\]
are called \textbf{complementary}. They share no halves and their halves are exactly each other's opposite cells.
\end{definition}
}

\begin{definition}[Dependency graph]
Let $G = (S,T,E_1 \cup E_2)$ satisfy simplicity, and let $H$ be its occupied-cell set. The dependency graph $\mathcal{D}$ has the nondegenerate 2-edges in $E_2$ as vertices. For two distinct vertices
\[
e = (i,j;k,l),\qquad e' = (i',j';k',l'),
\]
we draw a directed edge $e \rightarrow e'$ if {both opposite cells of $e$ are occupied and} one half of $e'$ is an opposite cell of $e$. That is, if {both $(i,l)$ and $(k,j)$ are occupied, and} one of $(i',j')$ or $(k',l')$ equals one of $(i,l)$ or $(k,j)$.
\end{definition}

\begin{definition}[Opposite cells for vertex-disjoint edges]
Given two edges $e$ and $f$ that are vertex-disjoint, the \textbf{opposite cells} are the cells formed by taking one vertex from each edge that are not already paired.
\begin{itemize}
\item For a 1-edge $(i,j)$ and a 2-edge $(k,l;p,q)$ with all vertices distinct: the opposite cells are $(i,l)$, $(i,q)$, $(j,p)$, and $(j,q)$.
\item For two 2-edges $(i,j;k,l)$ and $(p,q;r,s)$ with all vertices distinct: the opposite cells are formed by pairing vertices not already paired.
\end{itemize}
\end{definition}

The \textbf{weak augmented Zarankiewicz number} $z_{wA}(m,n)$ is the maximum possible total number of edges $|E_1| + |E_2|$ for which a weak generalized $C_4$-cycle does not exist for such an augmented bipartite graph $G$.

The \textbf{weak limited augmented Zarankiewicz number} $z_{wL}(m,n)$ is the maximum possible total number of edges $|E_1| + |E_2|$ for which a weak generalized $C_4$-cycle does not exist for a limited augmented bipartite graph $G$, and $|E_1|=z(m, n)$.

\begin{remark}
For two vertex-disjoint nondegenerate 2-edges, there are four opposite cells.
The original Condition 3 requires two specific opposite cells (from two different
pairs) to be unoccupied to avoid the five-cell pattern. W3 only requires that
at least one of the four opposite cells is unoccupied. This is the situation where
W3 is weaker than Condition 3 in this case.

Pairs of 1-edges are not subject to W3; the relevant cases are already handled
by (W1) or (W2').
\end{remark}

\begin{remark}
The new conditions are weaker than the original conditions from \cite{qi2026}:
\begin{itemize}
\item {Conditions (W2) and (W2') together are weaker than the original Condition 2. The original condition forbids any nondegenerate 2-edge whose two opposite cells are both occupied in any manner. Here we only forbid the case where both opposite cells are 1-edges (W2'), and we impose acyclicity (W2) only among those 2-edges whose opposite cells are both occupied, with complementary 2-cycles allowed.}
\item Condition W3 is weaker than the original Condition 3. Instead of requiring a complex five-cell configuration to be avoided, W3 only requires that for any vertex-disjoint pair of edges (at least one of them is a 2-edge), at least one opposite cell is unoccupied.
\end{itemize}
Therefore every original limited augmented graph is also weak admissible, and hence
\[
z_{wL}(m,n) \geq z_L(m,n).
\]
\end{remark}

\section{Main Theorem}

\begin{theorem}\label{thm:main}
Let $G = (S,T,E_1 \cup E_2)$ be an augmented bipartite graph satisfying conditions (S), (W1), (W2), (W2'), and (W3) from Definition 1. Then the associated doubly simple biquadratic form $P_G$ defined by (2) is irreducible, i.e.,
\[
\mathrm{SOS}(P_G) = |E_1| + |E_2|.
\]
\end{theorem}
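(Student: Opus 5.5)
The upper bound $\mathrm{SOS}(P_G)\le |E|$ is immediate from (2), so the task is the lower bound. I would use the standard Gram-matrix reformulation. Suppose $P_G=\sum_{p=1}^r f_p^2$ with bilinear forms $f_p(\mathbf{x},\mathbf{y})=\sum_{i,j} c^{(p)}_{ij}x_iy_j$. Collect the coefficients into vectors $v_{ij}=(c^{(1)}_{ij},\dots,c^{(r)}_{ij})\in\mathbb{R}^r$, one for each cell. Matching coefficients of $x_ix_ky_jy_l$ gives the equivalent conditions
\[
\|v_{ij}\|^2=a_{ijij},\qquad \langle v_{ij},v_{kl}\rangle+\langle v_{il},v_{kj}\rangle = 2a_{ijkl}\ (i\ne k,\ j\ne l),
\]
together with $\langle v_{ij},v_{il}\rangle=$ the coefficient of $x_i^2y_jy_l$ and $\langle v_{ij},v_{kj}\rangle=$ the coefficient of $x_ix_ky_j^2$. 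It then suffices to show that the vectors attached to 1-edges, and one representative vector per 2-edge, are mutually orthogonal and nonzero. That yields $|E|$ linearly independent vectors in $\mathbb{R}^r$, so $r\ge|E|$.

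First, I would note that unoccupied cells carry no diagonal term, so $v_{ij}=0$ there. Each 1-edge gives $\|v_{ij}\|=1$. Each 2-edge $(i,j;k,l)$ gives $\|v_{ij}\|=\|v_{kl}\|=1$ and a cross term of $2$ in the coefficient of $x_ix_ky_jy_l$.

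For a degenerate 2-edge, the term $x_i^2y_jy_l$ occurs only once, and (S) gives $\langle v_{ij},v_{il}\rangle=1$. Cauchy--Schwarz then forces $v_{ij}=v_{il}$.

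For a nondegenerate 2-edge, the relation reads $\langle v_{ij},v_{kl}\rangle+\langle v_{il},v_{kj}\rangle=1$. If an opposite cell is unoccupied, its vector is $0$, so $v_{ij}=v_{kl}$.

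The remaining nondegenerate 2-edges have both opposite cells occupied. By (W2') the two opposite cells are not both 1-edges, so at least one of them is a half of another 2-edge, and $e\to e'$ in $\mathcal{D}$. I would process these 2-edges along a topological order of $\mathcal{D}$, which exists by (W2), after first collapsing complementary 2-cycles into single nodes. A sink of the dependency order has opposite cells that are either unoccupied or halves of already-resolved 2-edges. For those halves I would show the cross term $\langle v_{il},v_{kj}\rangle$ vanishes, using orthogonality of distinct resolved representatives. The complementary pair $(i,j;k,l),(i,l;k,j)$ must be handled jointly. There the two relations coincide, namely $\langle v_{ij},v_{kl}\rangle+\langle v_{il},v_{kj}\rangle=2$ with all four vectors of unit length. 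Cauchy--Schwarz then forces both inner products to equal $1$, so $v_{ij}=v_{kl}$ and $v_{il}=v_{kj}$.

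Next comes orthogonality between distinct edges. For two edges sharing a row or column, the coefficient of $x_i^2y_jy_l$ (or its column analogue) is $0$ by (S). Once the halves of each 2-edge are identified, the relevant inner products are forced to zero. For vertex-disjoint edges $e,f$ with at least one a 2-edge, the mixed coefficient yields an equation of the form $\langle v_e,v_f\rangle+\langle v_{\text{opp}},v_{\text{opp}'}\rangle=0$. By (W3) one opposite cell is unoccupied, so the second term is $0$ and $v_e\perp v_f$. For two vertex-disjoint 1-edges, the second term involves the other pair of diagonal cells. If both are 1-edges, this would form a $C_4$, which (W1) forbids. If one is a half of a 2-edge, I would argue that the corresponding vectors are already known to be orthogonal at that stage of the induction.

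The main obstacle, I expect, is the interleaving in the nondegenerate case. Orthogonality facts are needed to resolve 2-edges, while resolved 2-edges are needed to prove orthogonality. I would set up a single induction on the topological order of $\mathcal{D}$ in which both statements are proved simultaneously, and I would check carefully that (W3), being weaker than the original Condition 3, still kills every cross term encountered.
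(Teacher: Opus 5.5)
Your proposal is correct and is essentially the paper's proof: Gram vectors; equality of the two halves of every 2-edge (degenerate and complementary ones by Cauchy--Schwarz, the remaining nondegenerate ones by induction along the order supplied by (W2), with (W2$'$) and then (B2)/(B3) or (W3) killing the cross term $\langle v_{il},v_{kj}\rangle$); and finally pairwise orthogonality of the edge representatives via (B2)/(B3), (W3) and (W1). The only real difference is that you finish by exhibiting $|E|$ orthonormal representatives directly, which is cleaner than the paper's minimal-support linear relation; two small points to tidy up are that in your ``sink'' step one opposite cell may be a 1-edge rather than a half of a resolved 2-edge (your 1-edge/2-edge orthogonality argument still covers this), and that in the case of two vertex-disjoint 1-edges the other two corners cannot be the two halves of a single 2-edge, because (W2$'$) excludes exactly that configuration.
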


\begin{proof}
Let
\[
P_G(\mathbf{x},\mathbf{y}) = \sum_{(i,j)\in E_1} x_i^2 y_j^2 + \sum_{(i,j;k,l)\in E_2} (x_i y_j + x_k y_l)^2.
\]

Suppose, for contradiction, that $\mathrm{SOS}(P_G) < |E_1| + |E_2|$. Then there exists an SOS decomposition (1) with $r < |E_1| + |E_2|$. For each occupied cell $(i,j)$, define $\mathbf{v}_{ij} \in \mathbb{R}^r$ as the coefficient vector of the monomial $x_i y_j$ in the decomposition. Let $H$ be the set of all occupied cells, so $|H| = |E_1| + |E_2|$. Since $|H| > r$, the vectors $\{\mathbf{v}_{ij} : (i,j) \in H\}$ lie in $\mathbb{R}^r$ and are linearly dependent. Choose a nontrivial linear relation
\[
\sum_{(i,j)\in H} \alpha_{ij} \mathbf{v}_{ij} = \mathbf{0} \tag{3}
\]
with minimal support $\mathcal{S} = \{(i,j) \in H : \alpha_{ij} \neq 0\}$.

{
By minimality, all vectors $\{\mathbf{v}_{ij} : (i,j) \in \mathcal{S}\}$ are nonzero and pairwise distinct as vectors. Indeed, if $\mathbf{v}_{ij} = \mathbf{v}_{kl}$ for two distinct cells, then combining their terms in (3) would yield a relation with smaller support, contradicting minimality.
}

\medskip
\noindent\textbf{Derivation of the orthogonality relations.}

{
We now expand the squares in (1) and compare coefficients. Write $P_G = \sum_{p=1}^r f_p^2$ with $f_p = \sum_{(i,j)\in H} v_{ij}^{(p)} x_i y_j$. Then $\mathbf{v}_{ij} = (v_{ij}^{(1)}, \ldots, v_{ij}^{(r)})^\top$.

For any occupied cell $(i,j)$, the coefficient of $x_i^2 y_j^2$ is
\[
\sum_{p=1}^r (v_{ij}^{(p)})^2 = \|\mathbf{v}_{ij}\|^2.
\]
Since this coefficient equals 1 for every 1-edge and for every half of every 2-edge (by simplicity, no cell is overlapped), we have:
\[
\|\mathbf{v}_{ij}\|^2 = 1 \quad\text{for every occupied cell }(i,j). \quad \text{(A1)}
\]

For any 2-edge $(i,j;k,l) \in E_2$, the coefficient of $x_i x_k y_j y_l$ gives:
\[
\mathbf{v}_{ij} \cdot \mathbf{v}_{kl} = 1 \quad\text{for degenerate 2-edges.} \quad \text{(A2cd/A2rd)}
\]
For nondegenerate 2-edges, we must consider the full coefficient:
\[
\mathbf{v}_{ij} \cdot \mathbf{v}_{kl} + \mathbf{v}_{il} \cdot \mathbf{v}_{kj} =
\begin{cases}
2, & \text{if } (i,l;k,j) \in E_2 \text{ (complementary pair)},\\
1, & \text{otherwise, by (S) no other 2-edge contributes.}
\end{cases} \quad \text{(A2nd/A2nd1)}
\]

For any two distinct occupied cells $(i,j)$ and $(k,l)$ that share a row ($i=k$, $j\neq l$), the coefficient of $x_i^2 y_j y_l$ gives:
\[
\mathbf{v}_{ij} \cdot \mathbf{v}_{il} = 0,
\]
unless $(i,j)$ and $(i,l)$ are the two halves of a row-degenerate 2-edge. \quad \text{(B2)}

Similarly, for two occupied cells sharing a column ($j=l$, $i\neq k$):
\[
\mathbf{v}_{ij} \cdot \mathbf{v}_{kj} = 0,
\]
unless they are the two halves of a column-degenerate 2-edge. \quad \text{(B3)}

For two distinct occupied cells $(i,j)$ and $(k,l)$ with all four indices distinct, the coefficient of $x_i x_k y_j y_l$ gives:
\[
\mathbf{v}_{ij} \cdot \mathbf{v}_{kl} + \mathbf{v}_{il} \cdot \mathbf{v}_{kj} = 0,
\]
unless $(i,j;k,l) \in E_2$ or $(i,l;k,j) \in E_2$. \quad \text{(B1)}
}

\medskip
\noindent{\textbf{Auxiliary Lemma: Grid Propagation.}}

{
We now prove a lemma that will be used to handle the grid propagation.

\begin{lemma}\label{lem:grid}
Let $d$ and $f$ be two vertex-disjoint edges, with at least one being a 2-edge. Suppose W3 forces at least one opposite cell of the pair $(d,f)$ to be unoccupied. Then for any half $(x,y)$ of a 2-edge in the grid and any half $(u,v)$ of a 2-edge in the grid (or for a 1-edge treated as a limiting case), we have
\[
\mathbf{v}_{xy} \cdot \mathbf{v}_{uv} = 0,
\]
provided that for any 2-edge in the grid whose halves have not yet been proved equal, Case 1 applies directly.
\end{lemma}

\begin{proof}
We prove this by analyzing the grid equations from (B1). The key observation is that the equations form a tensor product: for any two rows $r_1,r_2$ of $d$ and any two columns $c_1,c_2$ of $f$, we have
\[
\mathbf{v}_{r_1 c_1} \cdot \mathbf{v}_{r_2 c_2} + \mathbf{v}_{r_1 c_2} \cdot \mathbf{v}_{r_2 c_1} = 0,
\]
unless the corresponding 2-edge exists (in which case (A2nd) gives a constant, but this does not affect the propagation of zeros).

We consider the possible cases.

\textbf{Case A: $d$ is a 1-edge $(i,j)$ and $f$ is a 2-edge $(a,b;c,d)$.}

The opposite cells are $(i,b), (i,d), (j,a), (j,c)$. W3 guarantees at least one is unoccupied.

- If $(i,b)$ is unoccupied, then $\mathbf{v}_{ib}=0$. By (B1) applied to $(i,j)$ and $(a,b)$:
\[
\mathbf{v}_{ij} \cdot \mathbf{v}_{ab} + \mathbf{v}_{ib} \cdot \mathbf{v}_{aj} = 0,
\]
so $\mathbf{v}_{ij} \cdot \mathbf{v}_{ab} = 0$.

- If $(i,d)$ is unoccupied, then $\mathbf{v}_{id}=0$. By (B1) applied to $(i,j)$ and $(c,d)$:
\[
\mathbf{v}_{ij} \cdot \mathbf{v}_{cd} + \mathbf{v}_{id} \cdot \mathbf{v}_{cj} = 0,
\]
so $\mathbf{v}_{ij} \cdot \mathbf{v}_{cd} = 0$.

- If $(j,a)$ is unoccupied, then $\mathbf{v}_{ja}=0$. By (B1) applied to $(i,j)$ and $(a,b)$:
\[
\mathbf{v}_{ij} \cdot \mathbf{v}_{ab} + \mathbf{v}_{ib} \cdot \mathbf{v}_{aj} = 0,
\]
so $\mathbf{v}_{ij} \cdot \mathbf{v}_{ab} = 0$.

- If $(j,c)$ is unoccupied, then $\mathbf{v}_{jc}=0$. By (B1) applied to $(i,j)$ and $(c,d)$:
\[
\mathbf{v}_{ij} \cdot \mathbf{v}_{cd} + \mathbf{v}_{id} \cdot \mathbf{v}_{cj} = 0,
\]
so $\mathbf{v}_{ij} \cdot \mathbf{v}_{cd} = 0$.

Thus in all cases, we get $\mathbf{v}_{ij} \cdot \mathbf{v}_{ab} = 0$ or $\mathbf{v}_{ij} \cdot \mathbf{v}_{cd} = 0$. If we get only one, the other may be obtained through the grid equations if the corresponding opposite cell is also unoccupied, or by using the fact that if one half of $f$ gives zero, the propagation through the grid equations gives the other.

\textbf{Case B: Both $d$ and $f$ are 2-edges.}

Let $d$ have rows $r_1,r_2$ and columns $c_1,c_2$ (with halves $(r_1,c_1),(r_2,c_2)$), and let $f$ have rows $s_1,s_2$ and columns $d_1,d_2$ (with halves $(s_1,d_1),(s_2,d_2)$). The grid equations are:
\[
\mathbf{v}_{r_1 c_1} \cdot \mathbf{v}_{r_2 c_2} + \mathbf{v}_{r_1 c_2} \cdot \mathbf{v}_{r_2 c_1} = 0,
\]
\[
\mathbf{v}_{r_1 c_1} \cdot \mathbf{v}_{r_2 d_2} + \mathbf{v}_{r_1 d_2} \cdot \mathbf{v}_{r_2 c_1} = 0,
\]
and similarly for all combinations.

W3 guarantees at least one opposite cell is unoccupied. Suppose $\mathbf{v}_{r_1 c_1} = 0$ (cell $(r_1,c_1)$ unoccupied). Then from the grid equations:
\[
\mathbf{v}_{r_1 c_2} \cdot \mathbf{v}_{r_2 c_1} = 0,
\]
\[
\mathbf{v}_{r_1 d_2} \cdot \mathbf{v}_{r_2 c_1} = 0.
\]

Now, if $\mathbf{v}_{r_2 c_1}$ is a half of $d$ or $f$, then by Step 1 (or Case 1 if the 2-edge is not yet proved), $\mathbf{v}_{r_2 c_1}$ equals the other half of its 2-edge. Thus we get that the dot product of the other half of $d$ with the other half of $f$ is zero. By symmetry, this propagates through the grid to give all cross dot products zero.

More explicitly, let the halves of $d$ be $h_d^1 = (r_1,c_1)$ and $h_d^2 = (r_2,c_2)$ (with $\mathbf{v}_{h_d^1} = \mathbf{v}_{h_d^2}$ by Step 1), and the halves of $f$ be $h_f^1 = (s_1,d_1)$ and $h_f^2 = (s_2,d_2)$ (with $\mathbf{v}_{h_f^1} = \mathbf{v}_{h_f^2}$ by Step 1).

From the grid equations, if any cell $(r_i,c_j)$ is unoccupied, then one of the following holds:
\[
\mathbf{v}_{h_d^1} \cdot \mathbf{v}_{h_f^1} = 0,
\]
or
\[
\mathbf{v}_{h_d^1} \cdot \mathbf{v}_{h_f^2} = 0,
\]
or
\[
\mathbf{v}_{h_d^2} \cdot \mathbf{v}_{h_f^1} = 0,
\]
or
\[
\mathbf{v}_{h_d^2} \cdot \mathbf{v}_{h_f^2} = 0.
\]

Using the grid equations again, any one zero propagates to all others. For example, if $\mathbf{v}_{h_d^1} \cdot \mathbf{v}_{h_f^1} = 0$, then from the grid equation:
\[
\mathbf{v}_{h_d^1} \cdot \mathbf{v}_{h_f^2} + \mathbf{v}_{h_d^2} \cdot \mathbf{v}_{h_f^1} = 0,
\]
we get $\mathbf{v}_{h_d^1} \cdot \mathbf{v}_{h_f^2} = 0$. Similarly, using the grid equation with $h_d^2$ gives the remaining dot products zero.

Thus all cross dot products between halves of $d$ and halves of $f$ vanish.

\textbf{Case C: $d$ is a 2-edge and $f$ is a 1-edge.}

This is symmetric to Case A and follows by the same argument.

This completes the proof of the lemma.
\end{proof}
}

\medskip
\noindent\textbf{Step 1: Equality of the two halves of every 2-edge.}

{
We prove that for every 2-edge $(i,j;k,l) \in E_2$, we have $\mathbf{v}_{ij} = \mathbf{v}_{kl}$.

For degenerate 2-edges, this follows immediately from (A2cd) or (A2rd).

For nondegenerate 2-edges, we proceed by induction along a topological order of the dependency graph $\mathcal{D}$ after removing complementary 2-cycles. This is possible because (W2) guarantees that the remaining graph is acyclic. Complementary pairs are handled directly without induction.

Let $e = (i,j;k,l) \in E_2$ be nondegenerate. Assume the equality of the two halves has been proved for every predecessor of $e$ in the topological order.
}

\medskip
\noindent{\textbf{Case 1: The two opposite cells $(i,l)$ and $(k,j)$ are not both occupied.}}

{
If either is unoccupied, then $\mathbf{v}_{il} \cdot \mathbf{v}_{kj} = 0$. By (A2nd), $\mathbf{v}_{ij} \cdot \mathbf{v}_{kl} = 1$, so $\mathbf{v}_{ij} = \mathbf{v}_{kl}$.
}

\medskip
\noindent{\textbf{Case 2: $e$ is part of a complementary pair with $e'=(i,l;k,j)$.}}

{
By (A2nd1) and Cauchy-Schwarz, both terms in
\[
\mathbf{v}_{ij}\cdot\mathbf{v}_{kl}+\mathbf{v}_{il}\cdot\mathbf{v}_{kj}=2
\]
equal 1, so $\mathbf{v}_{ij}=\mathbf{v}_{kl}$ and $\mathbf{v}_{il}=\mathbf{v}_{kj}$.
}

\medskip
\noindent{\textbf{Case 3: Both opposite cells are occupied, and at least one is a half of a degenerate 2-edge.}}

{
Suppose $(i,l)$ is a half of degenerate 2-edge $d$, with $\mathbf{v}_{il} = \mathbf{v}_{i'l'}$. Let $f$ be the edge containing $(k,j)$.

If $d$ and $f$ share a vertex, then (B2) or (B3) gives $\mathbf{v}_{il} \cdot \mathbf{v}_{kj} = 0$.

If $d$ and $f$ are vertex-disjoint, then W3 applies. By Lemma \ref{lem:grid}, the grid propagation gives $\mathbf{v}_{il} \cdot \mathbf{v}_{kj} = 0$.

Thus $\mathbf{v}_{il} \cdot \mathbf{v}_{kj} = 0$. By (A2nd), $\mathbf{v}_{ij} \cdot \mathbf{v}_{kl} = 1$, so $\mathbf{v}_{ij} = \mathbf{v}_{kl}$.
}

\medskip
\noindent{\textbf{Case 4: Both opposite cells are occupied, and neither is a half of a degenerate 2-edge.}}

{
Then each of $(i,l)$ and $(k,j)$ is either a 1-edge or a half of a nondegenerate 2-edge. By (W2'), they are not both 1-edges.
}

\medskip
\noindent{\textbf{Case 4a: One opposite cell is a 1-edge and the other is a half of a nondegenerate 2-edge.}}

{
Suppose $(i,l)$ is a 1-edge and $(k,j)$ is a half of $e'=(k,j;r,s)$.

If either opposite cell of $e'$ is unoccupied, then by Case 1 applied to $e'$ (which does not depend on induction), we have $\mathbf{v}_{kj} = \mathbf{v}_{rs}$ immediately. Thus assume both opposite cells of $e'$ are occupied, so $e'$ is a predecessor. By induction,
\[
\mathbf{v}_{kj} = \mathbf{v}_{rs}.
\]

The opposite cells of the pair $((i,l), e')$ are $(i,j)$, $(i,s)$, $(k,l)$, $(r,l)$. W3 forces at least one to be unoccupied. Analyzing each case using (B1):

- If $(i,j)$ is unoccupied: $\mathbf{v}_{ij}=0$. By (B1) applied to $(i,l)$ and $(k,j)$:
\[
\mathbf{v}_{il} \cdot \mathbf{v}_{kj} + \mathbf{v}_{ij} \cdot \mathbf{v}_{lk} = 0,
\]
so $\mathbf{v}_{il} \cdot \mathbf{v}_{kj} = 0$.

- If $(i,s)$ is unoccupied: $\mathbf{v}_{is}=0$. By (B1) applied to $(i,l)$ and $(r,s)$:
\[
\mathbf{v}_{il} \cdot \mathbf{v}_{rs} + \mathbf{v}_{is} \cdot \mathbf{v}_{lr} = 0,
\]
so $\mathbf{v}_{il} \cdot \mathbf{v}_{rs} = 0$, hence $\mathbf{v}_{il} \cdot \mathbf{v}_{kj} = 0$.

- If $(k,l)$ is unoccupied: $\mathbf{v}_{kl}=0$. By (B1) applied to $(i,l)$ and $(k,j)$:
\[
\mathbf{v}_{il} \cdot \mathbf{v}_{kj} + \mathbf{v}_{ij} \cdot \mathbf{v}_{lk} = 0,
\]
so $\mathbf{v}_{il} \cdot \mathbf{v}_{kj} = 0$.

- If $(r,l)$ is unoccupied: $\mathbf{v}_{rl}=0$. By (B1) applied to $(i,l)$ and $(r,s)$:
\[
\mathbf{v}_{il} \cdot \mathbf{v}_{rs} + \mathbf{v}_{is} \cdot \mathbf{v}_{lr} = 0,
\]
so $\mathbf{v}_{il} \cdot \mathbf{v}_{rs} = 0$, hence $\mathbf{v}_{il} \cdot \mathbf{v}_{kj} = 0$.

Thus $\mathbf{v}_{il} \cdot \mathbf{v}_{kj} = 0$. By (A2nd), $\mathbf{v}_{ij} \cdot \mathbf{v}_{kl} = 1$, so $\mathbf{v}_{ij} = \mathbf{v}_{kl}$.
}

\medskip
\noindent{\textbf{Case 4b: Both opposite cells are halves of nondegenerate 2-edges.}}

{
Suppose $(i,l)$ is a half of $e_1$ and $(k,j)$ is a half of $e_2$, with $e_1 \neq e_2$.

If either opposite cell of $e_1$ or $e_2$ is unoccupied, then by Case 1 applied to that 2-edge, we get the equality immediately. Thus assume both are predecessors. By induction,
\[
\mathbf{v}_{il} = \mathbf{v}_{i'l'} \quad\text{and}\quad \mathbf{v}_{kj} = \mathbf{v}_{k'j'}.
\]

If $e_1$ and $e_2$ share a vertex, then (B2) or (B3) gives $\mathbf{v}_{il} \cdot \mathbf{v}_{kj} = 0$. If they are vertex-disjoint, W3 applies. By Lemma \ref{lem:grid}, the grid propagation gives $\mathbf{v}_{il} \cdot \mathbf{v}_{kj} = 0$.

Thus $\mathbf{v}_{il} \cdot \mathbf{v}_{kj} = 0$. By (A2nd), $\mathbf{v}_{ij} \cdot \mathbf{v}_{kl} = 1$, so $\mathbf{v}_{ij} = \mathbf{v}_{kl}$.
}

{\color{blue}
Thus in all cases, $\mathbf{v}_{ij} = \mathbf{v}_{kl}$ for every 2-edge. Moreover, by minimality of $\mathcal{S}$, the two halves of a 2-edge cannot both belong to $\mathcal{S}$.
}

\medskip
\noindent\textbf{Step 2: Orthogonality of distinct vectors in $\mathcal{S}$.}

{
Let $(i,j)$ and $(k,l)$ be distinct in $\mathcal{S}$. We prove $\mathbf{v}_{ij} \cdot \mathbf{v}_{kl} = 0$.

\textbf{Case A: Four indices distinct.}

\textbf{Subcase A.1: At least one is not a 1-edge.}
Suppose $(i,j)$ is a half of $e=(i,j;r,s)$, so $\mathbf{v}_{ij} = \mathbf{v}_{rs}$. If $(k,l)$ shares a vertex with $e$, then (B2) or (B3) gives the dot product zero. If they are vertex-disjoint, W3 applies, and the same grid propagation argument as in Lemma \ref{lem:grid} gives $\mathbf{v}_{ij} \cdot \mathbf{v}_{kl} = 0$. (If $(k,l)$ is a half of a 2-edge, Lemma \ref{lem:grid} applies directly; if $(k,l)$ is a 1-edge, the argument is a limiting case.)

\textbf{Subcase A.2: Both are 1-edges.}
By (A1), $\|\mathbf{v}_{ij}\| = \|\mathbf{v}_{kl}\| = 1$. Coefficient comparison gives
\[
\mathbf{v}_{ij} \cdot \mathbf{v}_{kl} + \mathbf{v}_{il} \cdot \mathbf{v}_{kj} = c,
\]
where
\[
c = \begin{cases}
2, & \text{if } (i,l;k,j) \in E_2 \text{ and } (i,j;k,l) \in E_2 \text{ (complementary pair)},\\
1, & \text{if } (i,l;k,j) \in E_2 \text{ but } (i,j;k,l) \notin E_2,\\
0, & \text{otherwise.}
\end{cases}
\]
Suppose $\mathbf{v}_{ij} \cdot \mathbf{v}_{kl} \neq 0$. Then $(i,l)$ and $(k,j)$ are occupied and $\mathbf{v}_{il} \cdot \mathbf{v}_{kj} \neq 0$. Analyzing the four cases for $(i,l)$ and $(k,j)$ using Step 1 gives a contradiction in each case. Thus $\mathbf{v}_{ij} \cdot \mathbf{v}_{kl} = 0$.

\textbf{Case B: They share a row or column.}
Then (B2) or (B3) gives the dot product zero.
}

\medskip
\noindent\textbf{Step 3: Contradiction.}

{
The vectors in $\mathcal{S}$ are nonzero and pairwise orthogonal, hence linearly independent. But they satisfy the nontrivial linear relation (3), impossible. Therefore $\mathrm{SOS}(P_G) = |E_1| + |E_2|$. $\square$
}
\end{proof}

\begin{remark}
As noted, the weak framework requires a more involved argument than the original framework. In Step 1, the original proof in \cite{qi2026} does not need Condition 3 and is very direct; here we must combine W1, W2, W2', and W3 and use induction. In Step 2, the original proof is simple, while the current proof requires a detailed case analysis. This is because the original conditions are strong enough to make the proof much easier, whereas the weak conditions require more careful handling of the cases that were previously forbidden.
\end{remark}

{
\subsection{The Inequality Chain}
}

\begin{theorem} \label{thm:chain}
For all $m,n \geq 2$, we have
\[
\mathrm{BSR}(m,n) \geq z_{wA}(m,n) \geq z_{wL}(m,n) \geq z_L(m,n) \geq z(m,n).
\]
\end{theorem}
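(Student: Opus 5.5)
The chain will be proved link by link, from right to left, with Theorem~\ref{thm:main} supplying the only nontrivial link.

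\textbf{Link $z_L(m,n)\ge z(m,n)$.} This is taken from \cite{qi2026}. It can also be seen directly. Take an extremal $C_4$-free bipartite graph $G_1$ with $|E_1|=z(m,n)$ and $E_2=\emptyset$. This graph is a limited augmented bipartite graph. Every condition involving 2-edges is vacuous, so it is admissible with total edge count $z(m,n)$.

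\textbf{Link $z_{wL}(m,n)\ge z_L(m,n)$.} This is the content of the remark preceding Theorem~\ref{thm:main}. Condition (W1) coincides with the original Condition 1. The pair (W2)+(W2') is implied by the original Condition 2. That condition forbids any nondegenerate 2-edge with both opposite cells occupied, so the dependency graph has no arcs at all, and (W2') holds a fortiori. Finally, (W3) is implied by the original Condition 3. Hence every original limited augmented admissible graph is weak admissible with the same $|E_1|+|E_2|$. Taking maxima gives the inequality. I would double-check the implication Condition 3 $\Rightarrow$ (W3) against the precise original formulation in \cite{qi2026}, since this is the one place where the comparison of definitions requires care.

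\textbf{Link $z_{wA}(m,n)\ge z_{wL}(m,n)$.} This is immediate. The feasible class for $z_{wL}$ is the subclass of that for $z_{wA}$ with the extra constraint $|E_1|=z(m,n)$. A maximum over a subset cannot exceed the maximum over the whole set. Both maxima are attained because there are finitely many graphs on $[m]\times[n]$, and the classes are nonempty by the first link.

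\textbf{Link $\mathrm{BSR}(m,n)\ge z_{wA}(m,n)$.} This is the substantive link. Choose a weak admissible augmented graph $G$ attaining $|E_1|+|E_2|=z_{wA}(m,n)$, so that $G$ satisfies (S), (W1), (W2), (W2') and (W3). By construction in (2), $P_G$ is an $m\times n$ SOS biquadratic form. By Theorem~\ref{thm:main}, $\SOS(P_G)=|E_1|+|E_2|$. Since $\BSR(m,n)$ is the maximum SOS rank over all $m\times n$ SOS biquadratic forms, we get $\BSR(m,n)\ge \SOS(P_G)=z_{wA}(m,n)$.

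The main obstacle is not in this proof itself, because all the depth is carried by Theorem~\ref{thm:main}, which we assume. The only point needing attention is verifying that the original conditions imply the weak ones, especially for (W3).
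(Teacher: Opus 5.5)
Your proposal is correct and follows the paper's proof link by link. The paper likewise obtains $z_{wL}\ge z_L$ from the remark comparing the weak and original conditions, $z_{wA}\ge z_{wL}$ by definition, and $\mathrm{BSR}(m,n)\ge z_{wA}(m,n)$ by applying Theorem~\ref{thm:main} to a maximizing weak admissible graph. The differences are only added detail: you prove $z_L\ge z$ directly by taking $E_2=\emptyset$ (the paper just cites it as standard), and you spell out that the dependency graph has no arcs under the original Condition 2 and that the maxima exist by finiteness.
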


\begin{proof}
The inequality $z_{wL}(m,n) \geq z_L(m,n)$ follows from Remark 1. The inequality $z_L(m,n) \geq z(m,n)$ is standard \cite{cui2026}. By definition, $z_{wA}(m,n) \geq z_{wL}(m,n)$. Let $G$ be an augmented bipartite graph that does not have a weak generalized $C_4$-cycle. By Theorem \ref{thm:main}, the corresponding SOS biquadratic form $P_G$ defined by (2) satisfies $\mathrm{SOS}(P_G) = |E_1| + |E_2|$. Without loss of generality, suppose $G$ is a graph that achieves $z_{wA}(m,n) = |E_1| + |E_2|$. Consequently, $\mathrm{BSR}(m,n) \geq \mathrm{SOS}(P_G) = z_{wA}(m,n)$. $\square$
\end{proof}

\section{A Weak Admissible $5\times 3$ Construction with 10 Edges}
\label{sec:wL53}

In this section, we exhibit a $5\times 3$ weak admissible limited augmented bipartite graph with total edge count 10. Since $z(5,3)=8$, this yields
\[
z_{wL}(5,3)\ge 10,
\]
and consequently, by Theorem \ref{thm:main},
\[
\mathrm{BSR}(5,3)\ge 10.
\]
This improves the previously known bound $z_L(5,3)=9$ (see Theorem~4 in \cite{symmetry2026}) and demonstrates that the weak framework can already produce stronger lower bounds in small dimensions.

\begin{theorem}
For the $5\times 3$ case, we have
\[
z_{wL}(5,3)\ge 10,
\]
and hence
\[
\mathrm{BSR}(5,3)\ge 10.
\]
\end{theorem}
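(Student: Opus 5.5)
The plan is to give an explicit $5\times 3$ limited augmented bipartite graph with $|E_1|=z(5,3)=8$ and $|E_2|=2$, check that it is weak admissible, and then apply Theorem~\ref{thm:main} and Theorem~\ref{thm:chain}. I would use only row-degenerate 2-edges, because then (W2) and (W2') hold trivially: both conditions refer only to nondegenerate 2-edges, so the dependency graph $\mathcal{D}$ has no vertices.

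\textbf{Construction.} For the 1-edges I take rows $1,2,3$ to have column sets $\{1,2\},\{1,3\},\{2,3\}$, row $4$ to have $\{1\}$, and row $5$ to have $\{2\}$. This gives $8$ edges. Any two rows share at most one column, so $G_1$ has no $C_4$ and (W1) holds. Since $z(5,3)=8$, the graph is limited. The free cells are $(4,2),(4,3),(5,1),(5,3)$. I add the two row-degenerate 2-edges
\[
(4,2;4,3)\quad\text{and}\quad(5,1;5,3).
\]
Their halves are four distinct free cells, so (S) holds. The total edge count is $10$.

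\textbf{Verification of (W3).} The two 2-edges share column $3$, so they are not vertex-disjoint. It remains to consider pairs consisting of a 1-edge and a 2-edge.
\begin{itemize}
\item The 2-edge $(4,2;4,3)$ uses row $4$ and columns $2,3$. A vertex-disjoint 1-edge must lie in column $1$ and avoid row $4$, so it is $(1,1)$ or $(2,1)$. For each such pair, the opposite cell $(4,1)$ pairs row $4$ with column $1$, and that cell is unoccupied.
\item The 2-edge $(5,1;5,3)$ uses row $5$ and columns $1,3$. A vertex-disjoint 1-edge must lie in column $2$ and avoid row $5$, so it is $(1,2)$ or $(3,2)$. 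For each such pair, the opposite cell $(5,2)$ is unoccupied.
\end{itemize}
Hence (W3) holds and $G$ is weak admissible.

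\textbf{Conclusion.} By the definition of $z_{wL}$, this gives $z_{wL}(5,3)\ge 10$. Theorem~\ref{thm:main} then gives $\SOS(P_G)=10$, and Theorem~\ref{thm:chain} gives $\BSR(5,3)\ge 10$.

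The construction and checks are finite. The main point of care is confirming that Theorem~\ref{thm:main} handles degenerate 2-edges. Step~1 of its proof uses (A2rd), which for each row-degenerate 2-edge gives $\mathbf{v}_{42}=\mathbf{v}_{43}$ and $\mathbf{v}_{51}=\mathbf{v}_{53}$. As a sanity check, I would verify Step~2 directly on this example, using (B2), (B3) and (B1) with the unoccupied cells $(4,1)$ and $(5,2)$.
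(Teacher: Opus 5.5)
Your construction is exactly the paper's: the same $E_1$ and $E_2=\{(4,2;4,3),(5,1;5,3)\}$. Your route is also the same: degenerate 2-edges make (W2) and (W2') vacuous, you check (S), (W1) and (W3), and then invoke Theorems~\ref{thm:main} and~\ref{thm:chain}. However, your verification of (W3) is false as written. You assert that the opposite cell $(4,1)$ is unoccupied for the pairs involving $(4,2;4,3)$, and that $(5,2)$ is unoccupied for the pairs involving $(5,1;5,3)$. Both are 1-edges of your own $E_1$, since row $4$ has column set $\{1\}$ and row $5$ has column set $\{2\}$. The underlying slip is your list of free cells. $E_1$ has seven free cells, $(1,3),(2,2),(3,1),(4,2),(4,3),(5,1),(5,3)$, not four. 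After placing the halves, the unoccupied cells are exactly $(1,3),(2,2),(3,1)$. Taken literally, your count would leave no unoccupied cell at all, and then (W3) would fail for every vertex-disjoint pair.

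The repair is immediate and is what the paper does. Your enumeration of the four vertex-disjoint pairs is correct; only the witnesses need changing:
\begin{itemize}
\item $(1,3)$ for $\bigl((1,1),(4,2;4,3)\bigr)$;
\item $(2,2)$ for $\bigl((2,1),(4,2;4,3)\bigr)$;
\item $(1,3)$ for $\bigl((1,2),(5,1;5,3)\bigr)$;
\item $(3,1)$ for $\bigl((3,2),(5,1;5,3)\bigr)$.
\end{itemize}
The same correction applies to your proposed sanity check of Step~2: it must use $\mathbf{v}_{13}=\mathbf{v}_{22}=\mathbf{v}_{31}=\mathbf{0}$, not the cells $(4,1)$ and $(5,2)$. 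With these, (B1) gives $\mathbf{v}_{11}\cdot\mathbf{v}_{43}=\mathbf{v}_{21}\cdot\mathbf{v}_{42}=\mathbf{v}_{12}\cdot\mathbf{v}_{53}=\mathbf{v}_{32}\cdot\mathbf{v}_{51}=0$. The remaining mixed products then vanish, for example $\mathbf{v}_{41}\cdot\mathbf{v}_{52}=-\mathbf{v}_{42}\cdot\mathbf{v}_{51}=-\mathbf{v}_{43}\cdot\mathbf{v}_{53}=0$ by (B3) in column $3$. So the ten distinct unit vectors are pairwise orthogonal, and $\SOS(P_G)=10$ follows directly. This check is worth including, since it makes $\BSR(5,3)\ge 10$ independent of the general theorem.
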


\begin{proof}
We construct an explicit weak admissible limited augmented bipartite graph $G=(S,T,E_1\cup E_2)$ with $S=[5]$, $T=[3]$.

Let the 1-edge set be
\[
E_1=\{(1,1),(1,2),(2,1),(2,3),(3,2),(3,3),(4,1),(5,2)\},
\]
which is the Type I extremal $C_4$-free graph for $5\times 3$ (see Appendix A of \cite{symmetry2026}). It is well known that $|E_1|=8=z(5,3)$ and $E_1$ contains no classical $C_4$.

Now define the 2-edge set
\[
E_2=\{(4,2;4,3),\ (5,1;5,3)\}.
\]
Both 2-edges are row-degenerate (same row, distinct columns). Their four halves are
\[
(4,2),\ (4,3),\ (5,1),\ (5,3),
\]
which are all distinct and lie in unoccupied cells of $E_1$. Indeed, the unoccupied cells of $E_1$ are
\[
U=\{(1,3),(2,2),(3,1),(4,2),(4,3),(5,1),(5,3)\},
\]
and all four halves are in $U$. Thus the simplicity condition (S) holds.

We now verify the weak admissibility conditions from Definition~1.

\medskip
\noindent\textbf{(W1).} As noted, $E_1$ is $C_4$-free, so (W1) holds.

\medskip
\noindent\textbf{(W2).} The dependency graph $\mathcal{D}$ is defined only for nondegenerate 2-edges. Since both 2-edges in $E_2$ are row-degenerate, $\mathcal{D}$ has no vertices and therefore contains no directed cycles. Thus (W2) holds.

\medskip
\noindent\textbf{(W2').} This condition applies only to nondegenerate 2-edges. Since there are no nondegenerate 2-edges in $E_2$, (W2') holds vacuously.

\medskip
\noindent\textbf{(W3).} We must verify that for any vertex-disjoint pair of edges where at least one is a 2-edge, at least one of their opposite cells is unoccupied.

Consider first a 1-edge and a 2-edge. The two 2-edges are $e_1=(4,2;4,3)$ and $e_2=(5,1;5,3)$.

For $e_1$, its row set is $\{4\}$ and column set is $\{2,3\}$. A 1-edge $(i,j)$ is vertex-disjoint from $e_1$ if $i\neq 4$ and $j\notin\{2,3\}$, i.e., $j=1$. The 1-edges with $j=1$ are $(1,1)$, $(2,1)$, and $(4,1)$. Among these, $(4,1)$ shares row 4, so only $(1,1)$ and $(2,1)$ are vertex-disjoint.

For $(1,1)$ and $e_1$, the opposite cells are $(1,2)$, $(1,3)$, $(4,1)$. Here $(1,2)$ is a 1-edge, $(1,3)$ is unoccupied, and $(4,1)$ is a 1-edge. Since $(1,3)$ is unoccupied, W3 is satisfied.

For $(2,1)$ and $e_1$, the opposite cells are $(2,2)$, $(2,3)$, $(4,1)$. Here $(2,2)$ is unoccupied, $(2,3)$ is a 1-edge, and $(4,1)$ is a 1-edge. Since $(2,2)$ is unoccupied, W3 holds.

For $e_2=(5,1;5,3)$, row set is $\{5\}$, column set $\{1,3\}$. A 1-edge vertex-disjoint from $e_2$ must have $i\neq 5$ and $j\notin\{1,3\}$, so $j=2$. The 1-edges with $j=2$ are $(1,2)$, $(3,2)$, and $(5,2)$. Among these, $(5,2)$ shares row 5, so only $(1,2)$ and $(3,2)$ are vertex-disjoint.

For $(1,2)$ and $e_2$, opposite cells are $(1,1)$, $(1,3)$, $(5,2)$. Here $(1,1)$ is a 1-edge, $(1,3)$ is unoccupied, and $(5,2)$ is a 1-edge. Since $(1,3)$ is unoccupied, W3 holds.

For $(3,2)$ and $e_2$, opposite cells are $(3,1)$, $(3,3)$, $(5,2)$. Here $(3,1)$ is unoccupied, $(3,3)$ is a 1-edge, and $(5,2)$ is a 1-edge. Since $(3,1)$ is unoccupied, W3 holds.

Now consider the pair of two 2-edges $e_1$ and $e_2$. For them to be vertex-disjoint, their row sets must be disjoint and their column sets disjoint. But $e_1$ has row set $\{4\}$ and column set $\{2,3\}$, while $e_2$ has row set $\{5\}$ and column set $\{1,3\}$. They share column 3, so they are not vertex-disjoint. W3 does not apply.

Thus every vertex-disjoint pair involving a 2-edge has at least one unoccupied opposite cell. Hence (W3) holds.

All weak admissibility conditions are satisfied. Therefore, by Theorem \ref{thm:main}, the associated doubly simple biquadratic form
\[
P_G(\mathbf{x},\mathbf{y}) = \sum_{(i,j)\in E_1} x_i^2 y_j^2 + (x_4 y_2 + x_4 y_3)^2 + (x_5 y_1 + x_5 y_3)^2
\]
is irreducible with
\[
\operatorname{SOS}(P_G) = |E_1| + |E_2| = 8 + 2 = 10.
\]
Consequently,
\[
z_{wL}(5,3) \ge 10,
\]
and by the inequality chain in Theorem \ref{thm:chain},
\[
\mathrm{BSR}(5,3) \ge z_{wL}(5,3) \ge 10.
\]
This completes the proof.
\end{proof}

\begin{remark}
This example demonstrates that the weak framework can improve lower bounds even in small dimensions where the original limited framework is tight ($z_L(5,3)=9$ \cite{qi26a}). The use of degenerate 2-edges is essential here, as any nondegenerate 2-edge in this $E_1$ would either violate W2' or create a dependency cycle.
\end{remark}

\begin{remark}
This example is weak admissible but \emph{not} admissible under the original framework: it violates the original Condition 3.

Indeed, consider the 2-edge $e_1=(4,2;4,3)$ and the cell $(k,l)=(5,1)$, which is a half of the other 2-edge $e_2=(5,1;5,3)$. Here $k=5\notin\{4\}$ and $l=1\notin\{2,3\}$, so Condition 3 applies. The five cells are
\[
(k,l)=(5,1),\quad (k,j)=(5,2),\quad (k,q)=(5,3),\quad (i,l)=(4,1),\quad (p,l)=(4,1).
\]
They are all occupied:
\begin{itemize}
    \item $(5,1)$ is a half of $e_2$;
    \item $(5,2)\in E_1$;
    \item $(5,3)$ is a half of $e_2$;
    \item $(4,1)\in E_1$;
    \item $(4,1)\in E_1$.
\end{itemize}
Thus the five-cell configuration of Condition 3 is triggered.

However, since $e_1$ and $e_2$ share column 3, they are not vertex-disjoint, so W3 does not apply. This demonstrates precisely why the weak framework can allow constructions that were previously forbidden under the original conditions.
\end{remark}

{
\section{A Critical $6\times 3$ Example: A Weak Admissible Construction with Complementary 2-Cycle}
\label{sec:6x3}

In this section, we present a $6\times 3$ augmented bipartite graph that satisfies all weak admissibility conditions, including the modified (W2) which allows complementary 2-cycles. By the main theorem (Theorem \ref{thm:main}), the associated doubly simple biquadratic form is irreducible with 12 squares, giving
\[
\BSR(6,3)\ge 12>11=z_L(6,3).
\]
This demonstrates that the weak framework's modification of (W2) to allow complementary 2-cycles is essential and yields improved lower bounds.

\begin{theorem}\label{thm:6x3}
There exists a $6\times 3$ weak admissible limited augmented bipartite graph with total edge count 12 that satisfies (S), (W1), (W2), (W2'), and (W3). Consequently,
\[
z_{wL}(6,3)\ge 12>11=z_L(6,3),
\]
and hence
\[
\BSR(6,3)\ge 12.
\]
\end{theorem}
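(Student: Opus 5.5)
We will prove Theorem~\ref{thm:6x3} by an explicit construction, followed by a verification of (S), (W1), (W2), (W2') and (W3), and then an appeal to Theorem~\ref{thm:main} and Theorem~\ref{thm:chain}.

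\textbf{Choice of $E_1$.} First we pin down $z(6,3)$. A $C_4$-free $6\times 3$ graph can have any two columns sharing at most one row. So at most $\binom{3}{2}=3$ rows have degree $2$, and the remaining rows have degree at most $1$. This gives $z(6,3)=3\cdot 2+3=9$. The complementary pair $(1,1;2,2)$ and $(1,2;2,1)$ occupies the whole block of rows $1,2$ and columns $1,2$. Hence $E_1$ must place its edges in rows $1,2$ only in column $3$. I would take
\[
E_1=\{(1,3),(2,3),(3,1),(3,2),(4,1),(4,3),(5,2),(5,3),(6,1)\}.
\]
Here rows $3,4,5$ realize the three column pairs $\{1,2\},\{1,3\},\{2,3\}$ exactly once, so $E_1$ is $C_4$-free with $|E_1|=9=z(6,3)$. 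The unoccupied cells of $E_1$ outside the block are $(3,3),(4,2),(5,1),(6,2),(6,3)$. As the third 2-edge I would add the row-degenerate $(6,2;6,3)$, so that
\[
E_2=\{(1,1;2,2),(1,2;2,1),(6,2;6,3)\}
\]
and the total is $12$. All six halves lie in distinct cells not in $E_1$, so (S) holds.

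\textbf{Verification of (W1), (W2), (W2').}
\begin{itemize}
\item (W1) is immediate from the $C_4$-freeness of $E_1$.
\item For (W2), the dependency graph $\mathcal{D}$ has only the two nondegenerate 2-edges as vertices. Each has both opposite cells occupied, by halves of the other, so $\mathcal{D}$ is exactly a complementary 2-cycle, which (W2) explicitly allows.
\item For (W2'), the opposite cells of each nondegenerate 2-edge are halves of 2-edges, not 1-edges.
\end{itemize}

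\textbf{Verification of (W3).} This is the step needing the most care, since every vertex-disjoint pair must be enumerated.
\begin{itemize}
\item The complementary 2-edges both use rows $\{1,2\}$ and columns $\{1,2\}$, so they are not vertex-disjoint from each other.
\item Each of them shares column $2$ with $(6,2;6,3)$, so W3 never applies to a pair of 2-edges.
\item A 1-edge disjoint from $(1,1;2,2)$ or $(1,2;2,1)$ must lie in column $3$ and a row in $\{3,\dots,6\}$, i.e.\ it is $(4,3)$ or $(5,3)$. Their opposite cells include $(4,2)$, respectively $(5,1)$, which are unoccupied.
\item A 1-edge disjoint from $(6,2;6,3)$ lies in column $1$ outside row $6$, i.e.\ it is $(3,1)$ or $(4,1)$. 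Their opposite cells include $(3,3)$, respectively $(4,2)$, which are unoccupied.
\end{itemize}
Thus (W3) holds.

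\textbf{Conclusion.} Theorem~\ref{thm:main} gives $\SOS(P_G)=12$. Hence $z_{wL}(6,3)\ge 12$, and Theorem~\ref{thm:chain} yields $\BSR(6,3)\ge 12$. The strict inequality $12>11=z_L(6,3)$ uses the known value $z_L(6,3)=11$ from the earlier literature, which I would simply cite rather than reprove.
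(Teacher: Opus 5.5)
Your proposal is correct and follows the same route as the paper: an explicit limited augmented graph built around the complementary pair $(1,1;2,2)$, $(1,2;2,1)$, a check of (S), (W1), (W2), (W2'), (W3), and then Theorem~\ref{thm:main} with Theorem~\ref{thm:chain}. The only difference is the witness: you swap rows $4$ and $5$ of $E_1$ and use the row-degenerate third 2-edge $(6,2;6,3)$ instead of the paper's nondegenerate $(6,2;3,3)$. Your witness checks out, and in fact the coefficient relations show directly that your twelve vector classes are pairwise orthogonal, without needing Theorem~\ref{thm:main}. One small fix: your derivation of $z(6,3)=9$ should also rule out a degree-$3$ row, which uses all three column pairs and caps the total at $3+5=8$.
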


\begin{proof}
Let $S=[6]$ and $T=[3]$. Define the 1-edge set
\[
E_1=\{(1,3),(2,3),(3,1),(3,2),(4,2),(4,3),(5,1),(5,3),(6,1)\}.
\]
Thus $|E_1|=9=z(6,3)$.

Define the 2-edge set
\[
E_2=\{(1,1;2,2),\ (1,2;2,1),\ (6,2;3,3)\}.
\]
The total number of edges is $|E_1|+|E_2|=9+3=12$.

We verify that this construction satisfies all weak admissibility conditions.

\medskip
\noindent\textbf{(S) Simplicity.}
The halves of the three 2-edges are
\[
(1,1),(2,2),(1,2),(2,1),(6,2),(3,3).
\]
They are all distinct and none lies in $E_1$. Thus (S) holds.

\medskip
\noindent\textbf{(W1) $C_4$-freeness.}
The row neighborhoods of $E_1$ are:
\[
\{3\},\ \{3\},\ \{1,2\},\ \{2,3\},\ \{1,3\},\ \{1\}.
\]
Checking all pairs of rows, no two rows share two columns. Hence $E_1$ is $C_4$-free and (W1) holds.

\medskip
\noindent\textbf{(W2) Acyclicity of dependency graph, with complementary 2-cycles allowed.}
The two 2-edges $e_1=(1,1;2,2)$ and $e_2=(1,2;2,1)$ form a complementary pair:
\[
e_1=(1,1;2,2),\qquad e_2=(1,2;2,1).
\]
Their halves are exactly each other's opposite cells:
\begin{itemize}
    \item Opposite cells of $e_1$: $(1,2)$, $(2,1)$ ¡ª these are the halves of $e_2$.
    \item Opposite cells of $e_2$: $(1,1)$, $(2,2)$ ¡ª these are the halves of $e_1$.
\end{itemize}
Thus the dependency graph contains the 2-cycle $e_1 \leftrightarrow e_2$, which is explicitly allowed by the modified (W2). The third 2-edge $e_3=(6,2;3,3)$ has no dependencies with $e_1$ or $e_2$ since its opposite cells $(6,3)$ and $(3,2)$ are not halves of any nondegenerate 2-edge. Hence the dependency graph contains no directed cycles other than the allowed complementary 2-cycle. Therefore (W2) holds.

\medskip
\noindent\textbf{(W2') Opposite cells not both 1-edges.}
For $e_1=(1,1;2,2)$, the opposite cells are $(1,2)$ and $(2,1)$, both of which are halves of $e_2$, not 1-edges. For $e_2=(1,2;2,1)$, the opposite cells are $(1,1)$ and $(2,2)$, both halves of $e_1$, not 1-edges. For $e_3=(6,2;3,3)$, the opposite cells are $(6,3)$ and $(3,2)$; $(6,3)$ is unoccupied and $(3,2)\in E_1$. Thus (W2') holds.

\medskip
\noindent\textbf{(W3) Vertex-disjoint pairs.}
The occupied cells are
\[
E_1\cup\{(1,1),(2,2),(1,2),(2,1),(6,2),(3,3)\}.
\]
The only unoccupied cells are
\[
U=\{(4,1),(5,2),(6,3)\}.
\]

We list all vertex-disjoint pairs involving a 2-edge:

First, note that no two 2-edges are vertex-disjoint:
\begin{itemize}
    \item $e_1=(1,1;2,2)$ and $e_2=(1,2;2,1)$ share rows 1 and 2.
    \item $e_1$ and $e_3=(6,2;3,3)$ share column 2.
    \item $e_2$ and $e_3$ share column 2.
\end{itemize}

For 1-edge and 2-edge pairs:

\textbf{For $e_1=(1,1;2,2)$:}
Rows $\{1,2\}$, columns $\{1,2\}$. A 1-edge $(i,j)$ is vertex-disjoint from $e_1$ iff $i\notin\{1,2\}$ and $j\notin\{1,2\}$, i.e., $j=3$. The 1-edges in $E_1$ with column 3 are $(3,3),(4,3),(5,3)$, but $(3,3)\notin E_1$. So the candidates are $(4,3)$ and $(5,3)$.

For $(4,3)$: opposite cells are $(4,1),(4,2),(1,3),(2,3)$. Since $(4,1)\in U$, W3 holds.

For $(5,3)$: opposite cells are $(5,1),(5,2),(1,3),(2,3)$. Since $(5,2)\in U$, W3 holds.

\textbf{For $e_2=(1,2;2,1)$:}
Rows $\{1,2\}$, columns $\{2,1\}$. A 1-edge $(i,j)$ is vertex-disjoint iff $i\notin\{1,2\}$ and $j\notin\{2,1\}$, i.e., $j=3$. The candidates are $(4,3)$ and $(5,3)$.

For $(4,3)$: opposite cells are $(4,2),(4,1),(1,3),(2,3)$. Since $(4,1)\in U$, W3 holds.

For $(5,3)$: opposite cells are $(5,2),(5,1),(1,3),(2,3)$. Since $(5,2)\in U$, W3 holds.

\textbf{For $e_3=(6,2;3,3)$:}
Rows $\{6,3\}$, columns $\{2,3\}$. A 1-edge $(i,j)$ is vertex-disjoint iff $i\notin\{6,3\}$ and $j\notin\{2,3\}$, i.e., $j=1$. The 1-edges in $E_1$ with column 1 are $(3,1),(5,1),(6,1)$, but $(3,1)$ shares row 3 and $(6,1)$ shares row 6. So the only candidate is $(5,1)$.

For $(5,1)$: opposite cells are $(5,2),(5,3),(6,1),(3,1)$. Since $(5,2)\in U$, W3 holds.

Thus every vertex-disjoint pair involving a 2-edge has at least one opposite cell in $U$. Hence (W3) holds.

Since the graph is limited ($|E_1|=z(6,3)=9$) and satisfies all weak admissibility conditions, by Theorem \ref{thm:main}, the associated doubly simple biquadratic form is irreducible with
\[
\SOS(P_G)=|E_1|+|E_2|=12.
\]
Therefore,
\[
z_{wL}(6,3)\ge 12>11=z_L(6,3),
\]
and consequently, by the inequality chain in Theorem \ref{thm:chain},
\[
\BSR(6,3)\ge 12.
\]
This completes the proof.
\end{proof}

\begin{remark}
This example demonstrates the importance of Condition W2 that allows complementary 2-cycles. Without allowing complementary 2-cycles, this construction would be excluded from the weak framework, and the improved bound $\BSR(6,3)\ge 12$ would not be obtainable.
\end{remark}

\begin{remark}
This example is weak admissible but \emph{not} admissible under the original framework. It violates the original Condition 2 since the opposite cells of $e_1$ are both occupied by halves of $e_2$, and the opposite cells of $e_2$ are both occupied by halves of $e_1$. Under the weak framework, this is allowed because the 2-cycle is complementary and explicitly permitted by (W2).
\end{remark}
}

\section{A $15 \times 6$ Weak Admissible But Not Admissible Construction}

The weak conditions and the original conditions agree on configurations involving
only 1-edges or a 1-edge with a 2-edge. Their difference only appears when
two 2-edges interact. Therefore, the effect of the new theory can only be
observed in sufficiently large dimensions.

We now present an explicit construction in the incidence-graph family of $K_6$
that is weak admissible but not admissible under the original framework.

Let $S=\binom{[6]}{2}$ and $T=[6]$, and let $E_1$ be the incidence graph of $K_6$.
Then $|E_1|=30$ \cite{guy1969,reiman1958}.
In \cite{xu2026}, it was shown that $z_L(15,6) \ge 43$ via an explicit
construction with $13$ 2-edges. We have checked that construction and found
that no additional 2-edge can be added to it without violating the original
conditions or the weak conditions. Thus it is maximal in both frameworks.

We have discovered a different construction with $14$ nondegenerate $2$-edges:
\[
\begin{aligned}
E_2=\{&(01,2;35,4),\ (01,3;45,2),\ (02,1;34,5),\ (02,3;14,5),\\
&(04,3;15,2),\ (04,5;12,3),\ (05,3;24,1),\ (05,4;23,1),\\
&(13,0;24,5),\ (14,2;35,0),\ (15,4;23,0),\ (25,1;34,0),\\
&(03,5;25,4),\ (13,2;03,4)\}.
\end{aligned}
\]

\begin{theorem}
For the incidence graph of $K_6$, the augmented bipartite graph $G=(S,T,E_1\cup E_2)$
defined above is weak admissible.
Consequently,
\[
z_{wL}(15,6)\ge |E_1|+|E_2| = 30+14 = 44,
\]
and hence
\[
\mathrm{BSR}(15,6)\ge 44.
\]
\end{theorem}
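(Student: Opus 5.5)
The plan is to verify directly that $G$ satisfies (S), (W1), (W2), (W2') and (W3), and then invoke Theorem~\ref{thm:main} together with the inequality chain of Theorem~\ref{thm:chain}. Throughout, a row $ab\in S$ is the pair $\{a,b\}\subset\{0,\dots,5\}$, and the 1-edges are exactly the incidences $(ab,a)$ and $(ab,b)$. A cell $(ab,c)$ is therefore a 1-edge iff $c\in\{a,b\}$. Each row has exactly four non-incident cells, so there are $60$ unoccupied cells of $E_1$, and the $28$ halves of $E_2$ must be chosen among them.

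\textbf{Simplicity and the 1-edge conditions.} First I list the $28$ halves and check two things: no half $(ab,c)$ has $c\in\{a,b\}$, and the halves are pairwise distinct. This is a finite table check, and it gives (S). For (W1), $E_1$ is the incidence graph of $K_6$, and two distinct edges of $K_6$ share at most one vertex, so $E_1$ is $C_4$-free. All $14$ 2-edges are nondegenerate, since the rows differ and the columns differ in each, so (W2') and (W2) concern every element of $E_2$.

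\textbf{(W2') and (W2).} For each 2-edge $e=(ab,c;de,f)$ I compute its two opposite cells $(ab,f)$ and $(de,c)$ and classify each as a 1-edge, a half of some 2-edge, or unoccupied. This immediately gives (W2'), because no $e$ may have both opposite cells incident. It also yields the dependency graph: $\mathcal D$ has an arc $e\to e'$ only when both opposite cells of $e$ are occupied and one of them is a half of $e'$. I expect most 2-edges to have at least one unoccupied opposite cell, making them sources with no outgoing arcs. The few remaining ones form a small digraph, and I will exhibit a topological order of it explicitly. I also need to check that there is no complementary pair to worry about, or, if there is one, that it is the only 2-cycle.

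\textbf{(W3), the main obstacle.} This is the bulk of the work. The first family is 1-edge/2-edge pairs. A 1-edge $(gh,g)$ is vertex-disjoint from $e=(ab,c;de,f)$ when $gh\notin\{ab,de\}$ and $g\notin\{c,f\}$. For each such pair I need one of the four opposite cells $(gh,c)$, $(gh,f)$, $(ab,g)$, $(de,g)$ to be unoccupied. Since $(ab,g)$ is occupied whenever $g\in\{a,b\}$ or it is a half, most checks reduce to locating a non-incident, non-half cell. The second family is pairs of vertex-disjoint 2-edges, where both row sets and both column sets must be disjoint. The most efficient route is to organize the work by the occupancy table: a $15\times 6$ matrix marking 1-edges, halves and the $32$ unoccupied cells. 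For each of the $14$ 2-edges I then scan the compatible 1-edges and 2-edges, and for each pair exhibit a specific unoccupied opposite cell as a witness. The symmetry of $K_6$ under $S_6$ does not preserve $E_2$, so I do not expect to cut the casework much by symmetry. A computer-checkable witness table, summarized in the text, is the honest way to present this step.

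\textbf{Conclusion.} Once all conditions hold, Theorem~\ref{thm:main} gives $\SOS(P_G)=30+14=44$. Since $|E_1|=30=z(15,6)$, the graph $G$ is limited, and so $z_{wL}(15,6)\ge 44$. Theorem~\ref{thm:chain} then yields $\BSR(15,6)\ge 44$.
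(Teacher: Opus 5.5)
Your plan is correct and is essentially the paper's own proof: a finite occupancy-table verification of (S), (W1), (W2), (W2$'$) and (W3), followed by Theorem~\ref{thm:main} and Theorem~\ref{thm:chain}, with $|E_1|=30=z(15,6)$ making $G$ limited. Carrying it out, $e_{13}=(03,5;25,4)$ is the only 2-edge whose two opposite cells are both occupied, so under the stated definition $\mathcal{D}$ has the single arc $e_{13}\to e_{14}$; the paper's additional arcs $e_2\to e_1$ and $e_6\to e_5$ arise only if the ``both occupied'' clause is dropped, and 2-edges with an unoccupied opposite cell are sinks, not sources as you wrote. Every vertex-disjoint 1-edge/2-edge pair and every vertex-disjoint 2-edge/2-edge pair does have an unoccupied opposite cell, so your check succeeds.
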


\begin{proof}
We verify each of the weak admissibility conditions in turn.

\medskip
\noindent\textbf{Simplicity (S).}
The halves of the 14 edges are:
\[
\begin{aligned}
&(01,2),\ (35,4),\ (01,3),\ (45,2),\ (02,1),\ (34,5),\ (02,3),\ (14,5),\\
&(04,3),\ (15,2),\ (04,5),\ (12,3),\ (05,3),\ (24,1),\ (05,4),\ (23,1),\\
&(13,0),\ (24,5),\ (14,2),\ (35,0),\ (15,4),\ (23,0),\ (25,1),\ (34,0),\\
&(03,5),\ (25,4),\ (13,2),\ (03,4).
\end{aligned}
\]
All 28 halves are distinct. Moreover, each half $(e,v)$ has $v \notin e$, so none
of them is a 1-edge. Thus (S) holds.

\medskip
\noindent\textbf{(W1).}
The 1-edge graph $E_1$ is the incidence graph of $K_6$, which is well known to be
$C_4$-free. Hence (W1) holds.

\medskip
\noindent\textbf{(W2).}
The dependency graph $\mathcal{D}$ has the 14 nondegenerate 2-edges as vertices.
A directed edge $e \to e'$ exists if one half of $e$ is an opposite cell of $e'$.
A direct computation gives the following dependencies:
\[
e_2 \to e_1, \qquad e_6 \to e_5, \qquad e_{13} \to e_{14},
\]
where the edges are indexed in the order they appear in $E_2$ above.
There are no other dependencies. The dependency graph consists of three disjoint
directed paths of length one, hence contains no directed cycles. Thus (W2) holds.

\medskip
\noindent\textbf{(W2').}
We verify that no nondegenerate 2-edge has both opposite cells as 1-edges.
For each of the 14 edges, the two opposite cells are listed below; in each case,
at least one is not a 1-edge:

\begin{itemize}
\item $e_1=(01,2;35,4)$: opposite $(01,4)$, $(35,2)$; neither is a 1-edge.
\item $e_2=(01,3;45,2)$: opposite $(01,2)$, $(45,3)$; neither is a 1-edge.
\item $e_3=(02,1;34,5)$: opposite $(02,5)$, $(34,1)$; neither is a 1-edge.
\item $e_4=(02,3;14,5)$: opposite $(02,5)$, $(14,3)$; neither is a 1-edge.
\item $e_5=(04,3;15,2)$: opposite $(04,2)$, $(15,3)$; neither is a 1-edge.
\item $e_6=(04,5;12,3)$: opposite $(04,3)$, $(12,5)$; neither is a 1-edge.
\item $e_7=(05,3;24,1)$: opposite $(05,1)$, $(24,3)$; neither is a 1-edge.
\item $e_8=(05,4;23,1)$: opposite $(05,1)$, $(23,4)$; neither is a 1-edge.
\item $e_9=(13,0;24,5)$: opposite $(13,5)$, $(24,0)$; neither is a 1-edge.
\item $e_{10}=(14,2;35,0)$: opposite $(14,0)$, $(35,2)$; neither is a 1-edge.
\item $e_{11}=(15,4;23,0)$: opposite $(15,0)$, $(23,4)$; neither is a 1-edge.
\item $e_{12}=(25,1;34,0)$: opposite $(25,0)$, $(34,1)$; neither is a 1-edge.
\item $e_{13}=(03,5;25,4)$: opposite $(03,4)$, $(25,5)$; $(03,4)$ is a half of $e_{14}$, not a 1-edge.
\item $e_{14}=(13,2;03,4)$: opposite $(13,4)$, $(03,2)$; neither is a 1-edge.
\end{itemize}

Thus (W2') holds.

\medskip
\noindent\textbf{(W3).}
We need to verify that for any vertex-disjoint pair of edges where at least one
is a 2-edge, at least one opposite cell is unoccupied. This is a finite check.
The exhaustive verification yields no violations. For completeness, we note the
most delicate case: for $e_{13}=(03,5;25,4)$, the opposite cells are $(03,4)$
and $(25,5)$. The cell $(03,4)$ is a half of $e_{14}$, and $(25,5)$ is a 1-edge.
However, W3 only applies to vertex-disjoint pairs. The pair $(e_{13}, e_{14})$
shares row $03$ and column $4$, so they are not vertex-disjoint; W3 does not
apply to this pair. For all vertex-disjoint pairs, at least one opposite cell
is unoccupied. Thus (W3) holds.

\medskip
\noindent\textbf{Violation of original Condition 2.}
This construction is \textbf{not} admissible under the original limited augmented
framework. Indeed, for $e_{13}=(03,5;25,4)$, the opposite cells are $(03,4)$
and $(25,5)$. The cell $(03,4)$ is a half of $e_{14}$, and $(25,5)$ is a 1-edge
(since $5 \in \{2,5\}$ in the incidence graph of $K_6$). Thus both opposite cells
are occupied, violating the original Condition 2.

Under the weak framework, this is allowed because:
\begin{itemize}
\item W2 holds: the dependency graph is acyclic;
\item W2' holds: $(03,4)$ is not a 1-edge, so the two opposite cells are not both 1-edges;
\item W3 holds: the pair $(e_{13}, e_{14})$ shares vertices, so W3 does not apply;
  and for all vertex-disjoint pairs, at least one opposite cell is unoccupied.
\end{itemize}
This demonstrates precisely why the weak framework yields a better lower bound
than the original limited augmented theory.

\medskip
\noindent\textbf{Conclusion.}
Since all weak conditions hold, $G$ is weak admissible. By Theorem \ref{thm:main}, the associated
doubly simple biquadratic form is irreducible with SOS rank
\[
|E_1|+|E_2| = 30+14 = 44.
\]
Therefore
\[
z_{wL}(15,6)\ge 44,
\]
and hence
\[
\mathrm{BSR}(15,6)\ge 44.
\]
This completes the proof.
\end{proof}

\begin{remark}
In the incidence-graph family of $K_6$, the previously known limited-framework
construction gave the lower bound $z_L(15,6)\ge 43$ \cite{xu2026}.
The present construction is not admissible under the original framework, yet
it is weak admissible. Hence the weak framework yields a strictly better
lower bound:
\[
z_{wL}(15,6)\ge 44>43.
\]
\end{remark}

\begin{remark}
This construction is weak admissible but \emph{not} admissible under the original framework: it violates the original Condition 2.

Indeed, consider the 2-edge $e_{13}=(03,5;25,4)$. Its opposite cells are $(03,4)$ and $(25,5)$. Both are occupied:
\begin{itemize}
    \item $(03,4)$ is a half of $e_{14}=(13,2;03,4)$;
    \item $(25,5)$ is a 1-edge, since $5$ is an endpoint of $25$.
\end{itemize}
Thus both opposite cells of a nondegenerate 2-edge are occupied, triggering Condition 2.

Under the weak framework, this is allowed because W2' only forbids the case where both opposite cells are 1-edges. Here $(03,4)$ is a half of another 2-edge, not a 1-edge, so W2' is not violated. This demonstrates precisely why the weak framework can allow constructions that were previously forbidden.
\end{remark}

\section{Why the local opposite-1-edge prohibition cannot be removed}
\label{sec:counterexample}

The broader weak framework allowed a nondegenerate $2$-edge whose two opposite cells are both $1$-edges.
The next example shows that this is too weak for irreducibility.

\begin{proposition}\label{prop:counterexample}
Let
\[
E_1=\{(1,1),(1,2),(2,1),(2,3),(3,2),(3,3),(4,1),(5,2)\}
\]
and
\[
E_2=\{(2,2;3,1),(4,2;5,3)\}.
\]
Then the associated doubly simple biquadratic form satisfies
\[
\operatorname{sos}(P_G)\le 9<10=|E_1|+|E_2|.
\]
Hence this $5\times 3$ example is not irreducible.
In particular, the local prohibition (W2$'$) cannot be removed entirely.
\end{proposition}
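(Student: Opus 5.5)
The plan is to exhibit an explicit Gram matrix of rank $9$ for $P_G$. Write $P_G=\mathbf{m}^\top Q\,\mathbf{m}$, where $\mathbf{m}$ is the vector of the $10$ occupied monomials $x_iy_j$, $(i,j)\in H$, and $Q\succeq 0$ is a symmetric Gram matrix. Any factorization $Q=\sum_{p=1}^{s}\mathbf{q}_p\mathbf{q}_p^\top$ gives $P_G=\sum_{p=1}^s(\mathbf{q}_p^\top\mathbf{m})^2$. So it suffices to find a PSD $Q$ representing $P_G$ with $\operatorname{rank}Q\le 9$.

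The natural Gram matrix $Q_0$ read off from (2) is the identity on the ten cells, plus the off-diagonal entries $1$ at the positions of the two 2-edges $\{(2,2),(3,1)\}$ and $\{(4,2),(5,3)\}$. It has rank $10$. The freedom in $Q$ comes from those monomials $x_ix_ky_jy_l$ ($i\ne k$, $j\ne l$) for which both cell pairs $\{(i,j),(k,l)\}$ and $\{(i,l),(k,j)\}$ are occupied. For each such quadruple we may replace the entries $(a,b)$ on the two pairs by any $(a+t,\,b-t)$. I would first list all such quadruples.

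Rows $1,2,3$ occupy every cell except $(1,3)$. So the quadruples in rows $\{1,2\}\times$ columns $\{1,2\}$, rows $\{1,3\}\times\{1,2\}$, rows $\{2,3\}\times\{1,2\}$ and rows $\{2,3\}\times\{2,3\}$ are all free. In contrast, the quadruples involving rows $4,5$ are mostly blocked because $(4,3)$, $(5,1)$ and $(1,3)$ are unoccupied. The key point is the 2-edge $(2,2;3,1)$: its opposite cells $(2,1)$ and $(3,2)$ are both 1-edges, which is exactly the configuration forbidden by (W2$'$). This makes the monomial $x_2x_3y_1y_2$ splittable between the pairs $\{(2,2),(3,1)\}$ and $\{(2,1),(3,2)\}$. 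The $4$-cell block alone cannot drop below rank $3$, since its Gram matrix is $\begin{pmatrix}1&t\\t&1\end{pmatrix}\oplus\begin{pmatrix}1&1-t\\1-t&1\end{pmatrix}$. So I would couple it to cells $(1,1)$ and $(1,2)$, and to the $\{2,3\}\times\{2,3\}$ quadruple, through the other free parameters.

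Concretely, I would restrict attention to the $8$ occupied cells in rows $1$–$3$ and keep the two cells $(4,1),(5,2)$ and the 2-edge $(4,2;5,3)$ as their own $3$ squares, since these share no free parameters with the rest. It then remains to show that the row-$1$–$3$ part, which naively needs $8-1=7$ squares, admits a PSD Gram matrix of rank $6$. To do this I would choose parameters so that a prescribed vector lies in the kernel. A natural attempt is a vector supported on the cells $(1,1),(1,2),(2,1),(2,2),(3,1),(3,2)$ with signs $\pm1$ and the right row/column pattern. Solving the resulting linear equations for the free parameters $t_{12},t_{13},t_{23},t'_{23}$ should then be straightforward. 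After that I would check positive semidefiniteness, for instance via a Cholesky or Schur-complement computation on the remaining $7\times 7$ block. Finally, I would write out the six bilinear forms explicitly and verify by direct expansion that, together with $x_4^2y_1^2$, $x_5^2y_2^2$ and $(x_4y_2+x_5y_3)^2$, they sum to $P_G$. That gives $9$ squares.

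The main obstacle is this parameter choice. The kernel conditions must be solvable while keeping $Q\succeq 0$, and if the first sign pattern fails I would try a kernel vector that also involves $(2,3),(3,3)$, using the $\{2,3\}\times\{2,3\}$ freedom. If a rank-$6$ choice on rows $1$–$3$ does not exist, the fallback is a numerical search: minimize the smallest eigenvalue over the full $10\times 10$ family, with the rows $4,5$ cells included. I would then rationalize the solution found and verify it symbolically.
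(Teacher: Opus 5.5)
Your architecture matches the paper's. The paper writes $P_G$ as $(x_4y_1)^2+(x_5y_2)^2+(x_4y_2+x_5y_3)^2$ plus six explicit squares in the eight occupied cells of rows $1$--$3$ (a rank-$6$ Gram block there), checked by direct expansion. But that certificate is the whole proof, and you never produce it. Worse, the route you describe does not reach rank $6$. The natural block on rows $1$--$3$ already has nullity $1$, because the 2-edge $(2,2;3,1)$ contributes a rank-one $2\times2$ block. So forcing \emph{one} prescribed vector into the kernel only guarantees what you already have; going from $8$ cells to $6$ squares needs a two-dimensional kernel.

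Concretely, in your four-parameter family, demand that a $\pm1$ vector supported on $\{1,2,3\}\times\{1,2\}$ lie in the kernel. This forces $t'_{23}=0$, the split $\tfrac12,\tfrac12$ of the $x_2x_3y_1y_2$ coefficient, and $|t_{12}|=|t_{13}|=\tfrac12$. Up to a diagonal sign change, the block is then $I+\tfrac12A(C_6)$ on those six cells, with eigenvalues $2,\tfrac32,\tfrac32,\tfrac12,\tfrac12,0$, plus the isolated cells $(2,3),(3,3)$. That is PSD of rank $7$, so still $10$ squares overall. Your numerical fallback also has the wrong objective: the smallest eigenvalue is unbounded below on this affine family, whereas you need a PSD point with a double zero eigenvalue.

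You also overlooked a free quadruple: rows $\{2,3\}$, columns $\{1,3\}$, where both $\{(2,1),(3,3)\}$ and $\{(2,3),(3,1)\}$ are occupied. The paper's six forms use it. Their Gram block has:
\begin{itemize}
\item $-\tfrac12$ on $\{(1,1),(2,2)\}$, $\{(1,1),(3,2)\}$, $\{(2,1),(3,3)\}$, $\{(2,2),(3,3)\}$;
\item $+\tfrac12$ on the other diagonal of each of these rectangles;
\item $\tfrac12,\tfrac12$ for the 2-edge split.
\end{itemize}
Its kernel is spanned by two $\pm1$ vectors, one supported on columns $1,2$ (your guess) and one on rows $2,3$.

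Your family does contain rank-$6$ points, just not ones with a $\pm1$ kernel vector. Taking $t_{12}=0$, $|t_{13}|=\tfrac{\sqrt3}{2}$, split $\tfrac12,\tfrac12$, $t'_{23}=0$ gives
\[
\bigl(x_3y_2+\tfrac{\sqrt3}{2}x_1y_1+\tfrac12x_2y_1\bigr)^2+\bigl(\tfrac12x_1y_1-\tfrac{\sqrt3}{2}x_2y_1\bigr)^2+\bigl(x_3y_1-\tfrac{\sqrt3}{2}x_1y_2+\tfrac12x_2y_2\bigr)^2+\bigl(\tfrac12x_1y_2+\tfrac{\sqrt3}{2}x_2y_2\bigr)^2,
\]
which equals $x_1^2y_1^2+x_1^2y_2^2+x_2^2y_1^2+x_3^2y_2^2+(x_2y_2+x_3y_1)^2$. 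Adding $(x_2y_3)^2$, $(x_3y_3)^2$ and your three row-$4,5$ squares gives $9$ squares. Until some such explicit certificate is written down and expanded, $\operatorname{sos}(P_G)\le 9$ is not established.

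Two minor slips: there are $12$ occupied cells, not $10$. Also, rows $4,5$ do share free quadruples with rows $1$--$3$ (e.g.\ rows $\{1,4\}$, columns $\{1,2\}$), though this is harmless since you may set those to $0$.
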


\begin{proof}
For this graph,
\[
\begin{aligned}
P_G={}&(x_1y_1)^2+(x_1y_2)^2+(x_2y_1)^2+(x_2y_3)^2+(x_3y_2)^2+(x_3y_3)^2 \\
&+(x_4y_1)^2+(x_5y_2)^2+(x_2y_2+x_3y_1)^2+(x_4y_2+x_5y_3)^2.
\end{aligned}
\]
Define
\[
\begin{aligned}
f_1&=x_1y_1-\frac12x_2y_2-\frac12x_3y_2,\\
f_2&=x_1y_2+\frac12x_2y_1+\frac12x_3y_1,\\
f_3&=\frac{\sqrt3}{6}(3x_2y_1-x_3y_1+2x_3y_2-2x_3y_3),\\
f_4&=\frac{\sqrt3}{6}(3x_2y_2+2x_3y_1-x_3y_2-2x_3y_3),\\
f_5&=x_2y_3+\frac12x_3y_1+\frac12x_3y_2,\\
f_6&=\frac{\sqrt3}{6}(x_3y_1+x_3y_2+2x_3y_3),\\
f_7&=x_4y_1,\\
f_8&=x_5y_2,\\
f_9&=x_4y_2+x_5y_3.
\end{aligned}
\]
A direct expansion shows that
\[
P_G=f_1^2+f_2^2+f_3^2+f_4^2+f_5^2+f_6^2+f_7^2+f_8^2+f_9^2.
\]
Therefore $\operatorname{sos}(P_G)\le 9$.
Since $|E_1|+|E_2|=10$, the form is not irreducible.
The first $2$-edge $(2,2;3,1)$ has opposite cells $(2,1)$ and $(3,2)$, and both are $1$-edges.
Thus the example violates (W2$'$), exactly as claimed.
\end{proof}

\begin{remark}
This counterexample shows that (W2$'$) cannot be removed entirely from the definition.
If we allow a nondegenerate $2$-edge to have both opposite cells as $1$-edges with
no restrictions, irreducibility can fail. Thus some restriction on opposite cells is
necessary in the general framework.

This does not imply that every graph violating (W2$'$) is reducible; rather, it shows
that (W2$'$) cannot be omitted from the general admissibility definition without
allowing reducible examples.

The exact value of $\operatorname{sos}(P_G)$ for this $5\times 3$ example
is not needed for our purposes. The inequality $\operatorname{sos}(P_G)\le 9 < 10$
is sufficient to show that the form is not irreducible, which is all that is
required to demonstrate that (W2') cannot be removed entirely.
\end{remark}

\section{Why W3 Cannot Be Removed}
\label{sec:W3counterexample}

In Section~\ref{sec:counterexample}, we showed that W2' cannot be removed from the weak framework: without it, there exists a construction satisfying all other weak conditions that is reducible. We now establish a parallel result for W3.

\begin{proposition}\label{prop:W3counterexample}
There exists a $4\times 4$ limited augmented bipartite graph satisfying (S), (W1), (W2), and (W2'), but violating W3, whose associated doubly simple biquadratic form satisfies
\[
\operatorname{sos}(P_G) \le 10 < 11 = |E_1| + |E_2|.
\]
Hence this example is reducible. In particular, W3 cannot be removed entirely from the weak framework.
\end{proposition}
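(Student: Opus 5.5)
The approach is to exhibit an explicit $4\times 4$ example, in the same spirit as Proposition~\ref{prop:counterexample}. I will write down a concrete graph $G$ with $|E_1|=z(4,4)=9$ and two 2-edges, check (S), (W1), (W2), (W2') directly, and exhibit one violated instance of W3. Then I will give an explicit list of ten bilinear forms whose squares sum to $P_G$. Everything reduces to a finite verification, so the proof will be a construction followed by a direct expansion.

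\textbf{Step 1: the graph.} I will choose $E_1$ to be a $C_4$-free $9$-edge graph on $[4]\times[4]$ with one heavy row. My first attempt is row $1$ with columns $\{1,2,3\}$, rows $2$ and $3$ each containing column $1$ plus one further column, and row $4$ filling out the remaining edges, arranged so that no two rows share two columns. Next I place a nondegenerate 2-edge $e=(2,2;3,3)$, or a suitable variant. It should be vertex-disjoint from the 1-edge $(1,1)$, and all four opposite cells $(1,2),(1,3),(2,1),(3,1)$ should be occupied; this is the W3 violation. At the same time, at least one of the own opposite cells $(2,3),(3,2)$ of $e$ must not be a 1-edge, so that (W2') holds. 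A second 2-edge, degenerate if possible so that $\mathcal{D}$ stays trivial, goes into the remaining unoccupied cells to bring the total to $11$. (W1) follows from the row neighbourhoods. (W2) holds because $\mathcal{D}$ has at most one vertex, or no arcs. (W2') is a two-cell check, and (S) is immediate.

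\textbf{Step 2: the decomposition.} The reducibility comes from the $2\times 2$ grid spanned by rows $\{1,2,3\}$ and columns $\{1,2,3\}$. In that grid the squares $(x_1y_1)^2$, $(x_1y_2)^2$, $(x_1y_3)^2$, $(x_2y_1)^2$, $(x_3y_1)^2$ and $(x_2y_2+x_3y_3)^2$ have cross terms that are mutually compatible. This is the exact mechanism that Lemma~\ref{lem:grid} uses W3 to forbid. I will isolate the sub-form $Q$ of $P_G$ supported on these occupied cells, plus any other cells in rows $2,3$. Its Gram matrix in the monomial basis $\{x_iy_j\}$ is not unique, because each unoccupied pair $(x_iy_j,x_ky_l)$ with $x_iy_lx_ky_j$ also present can carry a free parameter. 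I will search for a PSD Gram matrix of $Q$ whose rank is one less than the number of squares in $Q$. The first attempt imitates the rotation in Proposition~\ref{prop:counterexample}: mix $x_1y_1$ with $x_2y_2,x_3y_3$ and $x_1y_2,x_1y_3$ with $x_2y_1,x_3y_1$, with coefficients $\pm\frac12$ and $\frac{\sqrt3}{6}$-type entries.

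If the hand ansatz fails, I will solve the small semidefinite feasibility problem numerically, minimizing the trace or applying a rank-reduction heuristic. I will then round to exact algebraic values and factor the resulting Gram matrix as $V^\top V$ to read off $f_1,\dots,f_k$. The untouched squares, namely the other 1-edges and the second 2-edge, are carried over unchanged as the remaining $f_p$. Finally, I will expand $\sum_{p=1}^{10}f_p^2$ and compare coefficients with $P_G$.

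\textbf{Main obstacle.} The hard step is finding the rank-deficient Gram matrix. The positions of the 1-edges around $e$ and $(1,1)$ must leave enough free cross-term parameters for the rank to drop. If the first placement does not work, I will move the 2-edge so that more cells in its $3\times3$ block are occupied, while keeping (W1) and (W2') intact, and re-run the search.
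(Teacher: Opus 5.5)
Your overall shape matches the paper's: an explicit limited example, a check of (S), (W1), (W2), (W2'), and a rank-$10$ Gram matrix. But the proposal produces neither the example nor the certificate, and the configuration it starts from cannot exist. With row $1=\{1,2,3\}$, any row containing column $1$ cannot also contain column $2$ or $3$. So ``rows $2$ and $3$ each containing column $1$ plus one further column'' forces both rows to be $\{1,4\}$, which is a $C_4$. More generally, a $C_4$-free graph on $[4]\times[4]$ containing all of $(1,1),(1,2),(1,3),(2,1),(3,1)$ has at most $8$ edges, so it is never limited, since $z(4,4)=9$. To see this, note that if $(1,4)$ is present every other row has at most one edge. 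Otherwise rows $2,3$ lie in $\{1,4\}$ and cannot both use column $4$, and row $4$ has at most one of columns $1,2,3$, giving at most $3+3+2$. Hence, around $e=(2,2;3,3)$ and $(1,1)$, one of the four W3-opposite cells must be a half of the \emph{second} 2-edge. That 2-edge cannot be filler placed ``in the remaining unoccupied cells''. In the paper, $(3,1)$ is a half of $(3,1;4,2)$, and the count above then forces $E_1=\{(1,1),(1,2),(1,3),(2,1),(2,4),(3,2),(3,4),(4,3),(4,4)\}$.

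This also breaks Step 2. Your local rotation relies on a standalone square $(x_3y_1)^2$, which does not exist: $x_3y_1$ occurs only inside $(x_3y_1+x_4y_2)^2$. The resulting stray cross term $x_1x_4y_2y_3$ must be cancelled using the row-$4$ 1-edge $(4,3)$. Suppose you freeze the second 2-edge and the remaining 1-edges, as you propose, and search on the other cells of rows $1$--$3$. That sub-form has a unique Gram matrix. The all-ones $2\times 2$ block on $x_2y_2,x_3y_3$ forces their rows in any PSD Gram matrix to coincide, which kills every free parameter, so no rank drop exists there. The missing certificate is a null identity involving the second 2-edge and $(4,3)$. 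With $z_1=x_1y_1$, $z_2=x_1y_2$, $z_3=x_1y_3$, $z_4=x_2y_1$, $z_5=x_2y_2+x_3y_3$, $z_7=x_3y_1+x_4y_2$, $z_{10}=x_4y_3$, one has
\[
z_1z_5-z_2z_4-z_3z_7+z_2z_{10}\equiv 0 .
\]
The corresponding symmetric $N$ has spectrum $\pm1,\pm1,\pm\sqrt2$ and five zeros. Therefore $I_{11}+N/\sqrt2$ is a PSD Gram matrix of $P_G$ of rank $10$. Your preference for a degenerate second 2-edge can actually be kept: $(3,1;4,1)$ also works, with $z_7=x_3y_1+x_4y_1$ and $z_2z_{10}$ replaced by $z_1z_{10}$. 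In either version, the second 2-edge must sit on $(3,1)$ and take part in the cancellation.
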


\begin{proof}
Let
\[
E_1=\{(1,1),(1,2),(1,3),(2,1),(2,4),(3,2),(3,4),(4,3),(4,4)\},
\]
so $|E_1|=9=z(4,4)$, and let
\[
E_2=\{(2,2;3,3),\ (3,1;4,2)\}.
\]
Both 2-edges are nondegenerate, and the total number of edges is $|E_1|+|E_2|=11$.

It is straightforward to verify that this construction satisfies (S), (W1), (W2), and (W2'):
\begin{itemize}
\item (S) holds since the four halves $(2,2),(3,3),(3,1),(4,2)$ are distinct and none lies in $E_1$;
\item (W1) holds since $E_1$ is the standard extremal $C_4$-free graph for $z(4,4)=9$;
\item (W2) holds since the dependency graph has no directed edges;
\item (W2') holds since for each nondegenerate 2-edge, the two opposite cells are not both 1-edges.
\end{itemize}

However, it violates W3. Indeed, consider the 2-edge $e_1=(2,2;3,3)$ and the 1-edge $(1,1)$. They are vertex-disjoint, and the opposite cells are
\[
(1,2),\ (1,3),\ (2,1),\ (3,1),
\]
all of which are occupied ($(3,1)$ is a half of $e_2=(3,1;4,2)$). Thus no opposite cell is unoccupied, violating W3.

It remains to show that the associated doubly simple biquadratic form is reducible. The form is
\[
P_G(\mathbf{x},\mathbf{y}) = \sum_{(i,j)\in E_1} x_i^2 y_j^2
+ (x_2 y_2 + x_3 y_3)^2 + (x_3 y_1 + x_4 y_2)^2.
\]

Define eleven bilinear forms
\[
\begin{aligned}
z_1&=x_1y_1, & z_2&=x_1y_2, & z_3&=x_1y_3, & z_4&=x_2y_1, & z_5&=x_2y_2+x_3y_3,\\
z_6&=x_2y_4, & z_7&=x_3y_1+x_4y_2, & z_8&=x_3y_2, & z_9&=x_3y_4, & z_{10}&=x_4y_3,\\
z_{11}&=x_4y_4.
\end{aligned}
\]
Then
\[
P_G(\mathbf{x},\mathbf{y}) = z^\top I_{11} z,
\qquad
z=(z_1,\dots,z_{11})^\top.
\]

Let $N$ be the symmetric $11\times 11$ matrix whose only nonzero entries are
\[
N_{1,5}=N_{5,1}=1,\quad
N_{2,4}=N_{4,2}=-1,\quad
N_{2,10}=N_{10,2}=1,\quad
N_{3,7}=N_{7,3}=-1.
\]
A direct expansion gives
\[
z^\top N z = 2(z_1z_5-z_2z_4-z_3z_7+z_2z_{10}) \equiv 0.
\]
Hence for every real $t$,
\[
P_G(\mathbf{x},\mathbf{y}) = z^\top (I_{11}+tN) z.
\]

The matrix $N$ has eigenvalues $\pm 1,\ \pm 1,\ \pm \sqrt2$, and five zeros. Taking $t=1/\sqrt2$, the matrix $I_{11}+tN$ is positive semidefinite with a zero eigenvalue, so
\[
\operatorname{rank}(I_{11}+tN)=10.
\]
Thus $P_G$ admits a Gram matrix of rank 10, which implies
\[
\operatorname{sos}(P_G)\le 10 < 11.
\]

Therefore the construction is reducible. Since it satisfies all other weak conditions but violates W3, W3 cannot be removed from the weak framework.
\end{proof}

\begin{remark}
This example parallels the counterexample in Section~\ref{sec:counterexample} for W2'. Together, they demonstrate that both W2' and W3 are essential conditions in the weak framework: removing either one allows reducible examples to enter the admissible class.
\end{remark}

\section{Conclusions and Open Problems}

In this paper, we introduced the weak augmented Zarankiewicz number \(z_{wA}(m,n)\) and the weak limited augmented Zarankiewicz number \(z_{wL}(m,n)\). The core idea is simple: by relaxing the original admissibility conditions, we allow constructions that were previously forbidden, and these constructions yield improved lower bounds for the biquadratic SOS rank.

Specifically, we replaced the original Condition 2 by two weaker requirements:
\begin{itemize}
\item {\textbf{(W2)} Acyclicity of the dependency graph of nondegenerate 2-edges, except that complementary 2-cycles are allowed;}
\item \textbf{(W2')} A local prohibition that the two opposite cells of a nondegenerate 2-edge cannot both be 1-edges.
\end{itemize}
We also replaced the original Condition 3 by a weaker condition, \textbf{(W3)}, which only requires that for any vertex-disjoint pair of edges where at least one is a 2-edge, at least one of their opposite cells is unoccupied.

The main theorem (Theorem~\ref{thm:main}) proves that any weak admissible augmented bipartite graph yields an irreducible doubly simple biquadratic form:
\[
\mathrm{SOS}(P_G) = |E_1| + |E_2|.
\]
This extends the original limited augmented theorem and is proved via a modified vector argument.

{
The significance of this weakening is demonstrated by three complementary constructions.

\medskip
\noindent\textbf{First, a low-dimensional strict improvement using degenerate 2-edges.}
In Section~\ref{sec:wL53}, we exhibited a \(5\times 3\) weak admissible limited augmented graph with total edge count 10, using only row-degenerate 2-edges. Since \(z_L(5,3)=9\) was established in \cite{qi26a}, this gives
\[
z_{wL}(5,3) \ge 10 > 9 = z_L(5,3),
\]
and consequently
\[
\mathrm{BSR}(5,3) \ge 10.
\]
This is the smallest dimension in which the weak framework yields a strict improvement over the original limited augmented framework. It also demonstrates that degenerate 2-edges are not merely a technical convenience but can be essential for achieving optimal bounds.

\medskip
\noindent\textbf{Second, a large-scale nondegenerate construction.}
In Section~6, we presented a \(15\times 6\) construction on the incidence graph of \(K_6\) with 14 nondegenerate 2-edges. This construction is \textbf{not} admissible under the original framework: the 2-edge \((03,5;25,4)\) has both opposite cells occupied --- \((03,4)\) as a half of another 2-edge, and \((25,5)\) as a 1-edge --- violating Condition 2. Yet it satisfies all weak conditions: W2, W2', and W3. As a result,
\[
z_{wL}(15,6) \ge 44 > 43,
\]
improving the previously known lower bound from \cite{xu2026}.

\medskip
\noindent\textbf{Third, a critical construction with complementary 2-cycles.}
In Section~\ref{sec:6x3}, we presented a \(6\times 3\) weak admissible limited augmented graph with total edge count 12. This construction contains a complementary pair of 2-edges:
\[
(1,1;2,2)\quad\text{and}\quad(1,2;2,1),
\]
which form a 2-cycle in the dependency graph. Under the modified (W2), this complementary 2-cycle is explicitly allowed. The associated doubly simple biquadratic form is irreducible with
\[
\SOS(P_G)=12,
\]
giving
\[
z_{wL}(6,3) \ge 12 > 11 = z_L(6,3),
\]
and consequently
\[
\BSR(6,3) \ge 12.
\]
This example demonstrates that the containt of (W2) to allow complementary 2-cycles is not merely a technical convenience but is essential for obtaining improved lower bounds in the weak framework. It also shows that the weak framework can yield strict improvements even in moderate dimensions where degenerate 2-edges are not available.
}

{
We also showed that W2' and W3 cannot be removed entirely (Section~\ref{sec:counterexample} and Section 8): without them, reducible examples appear. Thus the weak framework strikes a balance --- weak enough to allow new constructions, yet strong enough to guarantee irreducibility.
}

The inequality chain
\[
\mathrm{BSR}(m,n) \geq z_{wA}(m,n) \geq z_{wL}(m,n) \geq z_L(m,n) \geq z(m,n)
\]
now stands as a unified bridge between SOS rank and extremal graph theory, with the weak framework providing the strongest known lower bounds in this direction.

Several natural questions remain for future investigation:

\subsection*{Open Problems}

{
\begin{enumerate}

\item \textbf{Infinite families with larger gaps.}
The \(5\times 3\) and \(6\times 3\) examples show that the weak framework
can yield strict improvements \(z_{wL} > z_L\) in small dimensions.
A fundamental open problem is to determine whether there exists
an infinite family of constructions that are weak admissible but violate
the original conditions, with a growing number of 2-edges. If such a family
exists, it would show that the weak framework yields a strictly larger
asymptotic gap than the original limited augmented theory, potentially
exceeding the \(25\%\) benchmark established in \cite{qi26a}.

\item \textbf{Further weakening of W2.}
The weak framework now allows complementary 2-cycles in the dependency graph,
as demonstrated by the \(6\times 3\) example in Section~\ref{sec:6x3}.
A natural question is whether W2 can be further weakened by allowing
longer cycles or other types of dependencies. The \(5\times 3\) construction
in Section~\ref{sec:wL53} uses only degenerate 2-edges, so W2 is vacuously satisfied.
The \(6\times 3\) example shows that complementary 2-cycles are safe.
Could longer cycles also be allowed while preserving irreducibility?
If such examples exist, they would show that the acyclicity condition
is even less restrictive than currently believed, and would suggest that
the weak framework could be further relaxed.

If no such examples exist, then the current formulation of W2 (acyclicity
with complementary 2-cycles allowed) is optimal among formulations that
relax only Condition 2.

\item \textbf{Other irreducible examples violating the weak conditions.}
The weak conditions are sufficient for irreducibility, but they are
not necessary. Are there irreducible doubly simple biquadratic
forms (2) that violate W2' or W3, yet still improve the lower
bounds of \(\BSR(m,n)\)? At this moment, we do not know any
such example. The counterexamples in Sections~\ref{sec:counterexample}
and 8 show that W2' and W3 cannot be removed entirely, but it remains
open whether there exist irreducible forms that violate these conditions
in a controlled way. Identifying even one such candidate would advance
both the weak limited augmented Zarankiewicz number theory and its
\(\BSR\) application.

\item \textbf{The exact values of \(z_{wL}(m,n)\) for small dimensions.}
The weak framework has yielded new lower bounds for several small dimensions:
\[
z_{wL}(5,3)\ge 10,\qquad z_{wL}(6,3)\ge 12,\qquad z_{wL}(15,6)\ge 44.
\]
Determining the exact values of \(z_{wL}(m,n)\) for these and other small
dimensions would provide valuable insight into the structure of the weak
framework and its relationship to the classical and limited augmented
Zarankiewicz numbers.

\end{enumerate}
}

The results obtained here lay the foundation for further exploration
of the deep connection between SOS representations and extremal
combinatorics.

\bigskip

\noindent\textbf{Acknowledgement}
This work was partially supported by Research Center for Intelligent Operations Research, The Hong Kong Polytechnic University (4-ZZT8), the National Natural Science Foundation of China (Nos. 12471282 and 12131004), and Jiangsu Provincial Scientific Research Center of Applied Mathematics (Grant No. BK20233002).

\medskip

\noindent\textbf{Data availability}
No datasets were generated or analysed during the current study.

\medskip

\noindent\textbf{Conflict of interest}
The authors declare no conflict of interest.

\end{document}